
\documentclass[12pt]{amsart}%
\usepackage{amsmath,amsfonts,amsthm,amscd,amssymb,stmaryrd,mathrsfs}%
\usepackage{amsmath}
\usepackage{amsfonts}
\usepackage{amssymb}
\usepackage{verbatim}
\usepackage{graphicx}
\usepackage{color}%
\setcounter{MaxMatrixCols}{30}
\providecommand{\U}[1]{\protect\rule{.1in}{.1in}}
\newtheorem{theorem}{Theorem}

\newtheorem{claim}[theorem]{Claim}

\newtheorem{corollary}[theorem]{Corollary}

\newtheorem{definition}[theorem]{Definition}

\newtheorem{lemma}[theorem]{Lemma}
\newtheorem{notation}[theorem]{Notation}

\newtheorem{proposition}[theorem]{Proposition}
\newtheorem{remark}[theorem]{Remark}

\newcommand{\R}{\mathbb{R}}

\setlength{\oddsidemargin}{0.25in}
\setlength{\evensidemargin}{0.25in}
\setlength{\textwidth}{6in}
\setlength{\topmargin}{-0in}
\setlength{\textheight}{8.5in}
\numberwithin{equation}{section}
\numberwithin{theorem}{section}
\begin{document}
\title[Approximating coarse Ricci]{Approximating coarse Ricci curvature on submanifolds of Euclidean space}
\author{Antonio G. Ache}
\email{tonyache@gmail.com}
\author{Micah W. Warren}
\address{Department of Mathematics, University of Oregon, Eugene OR 97403}
\email{micahw@oregon.edu}
\thanks{The first author was partially supported by a postdoctoral fellowship of the
National Science Foundation, award No. DMS-1204742.}
\thanks{The second author is partially supported by NSF Grant DMS-1438359. }
\maketitle

\begin{abstract}
For an embedded submanifold $\Sigma\subset\mathbb{R}^{N}$, Belkin and Niyogi showed that one can approximate the Laplacian operator using
heat kernels. Using a definition of coarse Ricci curvature derived by
iterating Laplacians, we approximate the coarse Ricci curvature of
submanifolds $\Sigma$ in the same way. For this purpose, we derive asymptotics for the approximation of the Ricci curvature proposed in \cite{AW}. Specifically, we prove Proposition 3.2 in \cite{AW}.  

\end{abstract}
\tableofcontents


\section{Introduction}

In \cite{BN08}, Belkin and Niyogi show that the graph Laplacian of a point
cloud of data samples taken from an embedded submanifold in Euclidean space converges to
the Laplace-Beltrami operator of the induced metric on the underlying manifold assuming that the samples are uniformly distributed on the submanifold (See also
\cite{Heinetal}).  In \cite{AW}, the authors proved that under similar assumptions, it is possible to formulate a sample version of the Ricci curvature that converges to the Ricci curvature with respect to the induced metric of the submanifold. In other words, assuming that points are uniformly distributed on the submanifold, one can construct an estimator of the Ricci curvature on the graph determined by the sample points that converges in probability to the Ricci curvature of the induced metric. 
\\

The process for approximating the Ricci curvature from a point cloud representing the submanifold follows two steps. The first step consists of approximating the Ricci curvature by a 1-parameter family of operators adopting an approach that is reminiscent to the approximation of the Laplace-Beltrami operator on submanifolds of Euclidean space. More precisely, the approximation relies on using a kernel that resembles the fundamental solution of the heat equation in Euclidean space in the sense that the kernel is computed in terms of an integrand that employs only ambient space information  (even though the integrand will eventually be restricted to the submanifold for the purpose of defining a convolution operator on the submanifold). The second step consists of showing that under an adequate choice of scale, the 1-parameter family of operators that are used to approximate the Ricci curvature can in turn be approximated by their sample counterparts. Even though this last statement seems uninteresting and in a certain sense expected, the non-linearity of the operators employed to approximate the Ricci curvature complicates the convergence analysis so much so that the use of ordinary laws of large numbers is simply hopeless, and it becomes necessary to use instead \emph{uniform laws of large numbers} to establish a convergence result. These issues are addressed in \cite{AW}. The goal of this paper is to establish a detailed analysis of the first half of the approximation process for the Ricci curvature, that is, defining a 1-parameter family of operators that approximate the Ricci curvature. More specifically, in this article we will provide a detailed proof of Proposition 3.2 in \cite{AW} which is instrumental in our manifold learning program for estimating Ricci curvature. We provide the reader with a statement of an improved version of Proposition 3.2 in \cite{AW} in a proposition that we have labeled as Proposition 3.1 (see Proposition \ref{tconvergence} in Section \ref{bias}). Proposition \ref{tconvergence} presents us with a method for computing the coefficients in the expansion of our 1-parameter approximation of the Ricci curvature, but Theorem \ref{extrinsic bias} gives us a more general result if all we want is merely to prove convergence as opposed to computing all coefficients in the expansion. 
\\

As mentioned before, the process of defining a heat-kernel type approximation of the Ricci curvature seems to be a simple generalization of the analogous process for the Laplace-Beltrami operator, but as we will see in this article, even in this context which does not involve any uniform laws of large numbers (as opposed to \cite{AW}) there are some technical hurdles that make this approximation far from a routine generalization of the Laplace-Beltrami case in \cite{BN08}. 
A recurring theme in \cite{AW} and \cite{AW1} is a discussion centered around the \emph{Carr\'{e} du Champ}, which is an object that measures how far is the infinitesimal generator of a semi-group from satisfying the Leibnitz rule (see \eqref{cdc}). The Carr\'{e} du Champ applied to the Laplace-Beltrami operator results in a bi-linear form that captures a cross-term that in principle does not seem to be very interesting in itself, however, the observation that the Carr\'{e} du Champ defines a bi-linear form allows us to formulate \emph{successive iterations} of this process (see \eqref{cdci}), and in particular the second iteration of the Carr\'{e} du Champ involves the \emph{Bochner formula} in a way that makes a connection to the Ricci curvature possible. In \cite{AW}, the authors proposed approximations of the Carr\'{e} du Champ and its second iterate based on the heat-kernel approximation of the Laplace-Beltrami operator in \cite{BN08} that would in principle allow for a satisfactory approximation of the Ricci curvature. In order to derive asymptotics that justify these approximations, it is necessary to consider Taylor-like expansions for the test functions used to evaluate all the operators that are of interest to this article so that the ``homogeneous" components of these Taylor approximations evaluated at test function determine the target operator (e.g. Laplace-Beltrami, Carr\'{e} du Champ or any of its iterates), and the remainder term in the Taylor expansion determines the rate of convergence of the approximation to the target operator. This last outline is perfectly well suited for the linear case (i.e. Laplace-Beltrami operator), but unfortunately is not sufficient for the non-linear case (Carr\'{e} du Champ and its iterates) due not only to the non-linearity of the problem itself but also to scaling considerations. To be more precise about what we mean by scaling considerations, we remind the reader that the parameter used in the approximation of the Ricci curvature through integral operators (we call it $t$), represents a scale parameter that is used to expand test functions near a point and capture information about their derivatives and about the local geometry of the submanifold. Since we are using a Gaussian-like convolution kernel, the scaling that we need to consider is simply the scaling that would normally be used for the fundamental solution of the heat equation. On the other hand, our approximation process follows a progression, i.e. we approximate first the Laplace-Beltrami operator, followed by its Carr\'{e} du Champ and finally its iterate, and each of these iterations requires dividing by the full scale parameter $t$ whereas the natural rescaling involved in the analysis of the heat equation requires multiplying the space variable by $\sqrt{t}$. The reason why it is necessary to divide by the parameter $t$ in every step of the progression, is that the approximation the Laplace-Beltrami operator in \cite{BN08} and \cite{AW} by means of a 1-parameter family of integral is analogous to the approximation of the Laplace operator in Euclidean space via difference quotients of the heat kernel.       
\\

As we will see, the complications derived from the scaling used in our problem suggest that one may potentially require a very high degree of differentiability from the test functions involved in the approximation, or perhaps an almost intractable expansion in order to obtain a satisfactory convergence result. Note that in principle the fact that we may require an elevated number of derivatives from the functions used to evaluate our approximating operators may not seem to be particularly inconvenient, however, we remind the reader that the approximation result in \cite[Proposition 3.2]{AW} requires derivatives of order at most $5$ from the test functions used in our approximation, and the convergence result in Theorem \ref{extrinsic bias} requires derivatives of order at most 4. We mitigate these concerns by using two important observations. The first observation is that as it usually happens with Gaussian convolution kernels in Euclidean space, there is an \emph{odd-cancellation} property that makes the main expansions in this article tractable. Even though the odd cancellation property is an Euclidean property (in the sense that it makes use of the symmetry of $\R^{n}$ for $n$ arbitrary), the fact that we are using sufficiently small scales provides an approximate odd-cancellation property for embedded submanifolds of Euclidean space that is sufficient for for the purposes of this article. The second and more important observation is that the non-linearity of the Carr\'{e} du Champ,  far from complicating the convergence to the Ricci curvature, actually helps us to simplify the convergence analysis by providing a faster than expected decay rate for remainder terms, and this can be used to compensate for the disparity in the scaling that results from successive applications of operators that carry a division by the scale factor $t$. A consequence of this analysis will be that we are able to extend our method to sampling distributions beyond the uniform distribution, more specifically to distributions that are absolutely continuous with respect to the volume for of the underlying embedded submanifold as long as the density functions of these distributions are sufficiently regular.  
\\



We now outline the structure of the paper: Section \ref{prelim} summarizes the background necessary to define an approximation to the Ricci curvature as in \cite{AW} (see Section \ref{approximation}), in particular it contains a precise definition of the Carr\'{e} du Champ (see the discussion surrounding \eqref{cdc} and \eqref{cdci}). Section \ref{prelim} also contains a brief introduction to the notion of Coarse Ricci Curvature as defined \cite{AW1} (see Section \ref{coarse}), as well as a discussion on the extension of the approximations in Section \ref{approximation} to the case where points are sampled not necessarily uniformly from the underlying submanifold. Section \ref{bias} contains the core of the technical work of this paper, and is in this section where we exploit the the benefits of using the odd-cancellation property. We compare the results in this section with Proposition 3.2 in \cite{AW}. Section \ref{bias} culminates with a proof of Theorem \ref{extrinsic bias} which is done through Corollary \ref{Cor4}. Section \ref{miscellaneous} is a discussion of how to extend the results in Section \ref{bias} to the case of a non-uniform density.  Section \ref{riccisubmanifold} contains formulas relating higher derivatives of extrinsically ``linear'' functions to the second fundamental form, allowing us to show the convergence of the coarse Ricci curvature, connecting the results in this paper to those in \cite{AW1}. Finally, in Appendix \ref{appendixA} we formulate the odd-cancellation property used in Section \ref{bias}, in fact the main statement of odd-cancellation is given in Lemma \eqref{local_moments} and in Appendix \ref{appendixB} we give detailed proofs of some of the most laborious estimates in Section \ref{bias}. 

\section*{Acknowledgements} The first author was partially supported by a postdoctoral fellowship of the National Science Foundation, award No. DMS-1204742. The second author was  partially supported by NSF Grant DMS-1438359. The authors are grateful to the anonymous referees who provided input and comments that greatly helped to improve the presentation of this article.

\section{Preliminaries and statement of results}\label{prelim}

\subsection{Iterated Carr\'{e} du Champ}
Let $(X,\mathcal{B})$ be a measure space and let $P_{t}$ be a 1-parameter family of operators of the form
\begin{align*}
    P_{t}f(x) =\int_{X}f(y)p_{t}(x,dy),
\end{align*}
for $x$ in $X$. (Note that it will be sufficient for the scope of our paper that this operator is defined on bounded functions.  See also remark \ref{compactspace}.)   We assume that $\{P_{t}\}_{t\ge 0}$ is a \emph{semi-group} in the sense that 
\begin{align*}
    P_{s}\circ P_{t} =P_{t+s}\\
    P_{0} = \mathrm{Id}.
\end{align*}
Let $L$ be the infinitesimal generator of a semi-group $\{P_{t}\}_{t\ge 0}$, which formally means that $L$ is defined by
\begin{align*}
    Lf=\frac{d}{dt}P_{t}f|_{t=0}=\lim_{t\rightarrow 0^{+}}\left(\frac{P_{t}f-f}{t}\right).
\end{align*}
The \emph{Carr\'{e} du Champ} of $L$ is defined as an operator that measures how far is the generator $L$ from being a derivation, i.e., how far is $L$ from satisfying the Leibnitz rule. More precisely, the Carr\'{e} du Champ of $L$ evaluated at functions $u,v$ is defined by the bilinear form 
\begin{equation}
\Gamma(L,u,v)=\frac{1}{2}\left(  L(uv)-L(u)v-uL(v)\right).  \label{cdc}%
\end{equation}
We will also consider the \emph{iterated Carr\'{e} du Champ} introduced by
Bakry and \'{E}mery \cite{BE85} denoted by $\Gamma_{2}$ and defined by
\begin{equation}
\Gamma_{2}(L,u,v)=\frac{1}{2}\left(  L(\Gamma(L,u,v))-\Gamma(L,Lu,v)-\Gamma
(L,u,Lv)\right)  . \label{cdci}%
\end{equation}

When $L$ is the rough Laplacian with respect to the metric $g$, then
\[
\Gamma(\Delta_{g},u,v)=\langle\nabla u,\nabla v\rangle_{g}.
\]
A simple computation shows that in this case we have the following identity for the iterated Carr\'{e} du Champ of the Laplacian 
\begin{align*}
    \Gamma_{2}(\Delta_{g},u,v) = \frac{1}{2}\Delta_{g}\langle\nabla u,\nabla v\rangle_{g}-\frac{1}{2}\langle\nabla\Delta_{g}u,\nabla v\rangle_{g}-\frac{1}{2}\langle\nabla v,\nabla \Delta_{g}v\rangle_{g}
\end{align*}
and commuting covariant derivatives we arrive at the \emph{Bochner formula} 
\begin{align}
    \Gamma_{2}(\Delta_{g},u,v) = \mathrm{Ric}_{g}(\nabla u,\nabla v)+\left\langle\nabla^{2}_{g}u,\nabla^{2}_{g}v\right\rangle_{g},\label{Boch}
\end{align}
where of course $\mathrm{Ric}_{g}$ is the Ricci curvature of $g$. Identity \eqref{Boch} is the key identity for recovering the Ricci curvature, and the details of this process are explained in \cite{AW}, however the reader may find Section \ref{riccisubmanifold} useful to understand some of the key aspects of the approximation of Ricci.

\begin{notation}
\emph{{ When considering the operators \eqref{cdc} and \eqref{cdci} we will
use the slightly cumbersome three-parameter notation, as the main results will
be stated in terms of a family of operators $\{L_{t}\}.$ } }
\end{notation}

\subsection{Coarse Ricci Curvature}\label{coarse}

In this section we provide a definition of coarse Ricci curvature on general
metric measures spaces, using a family of operators which are intended to
approximate a Laplace operator on a metric space at scale $t.$ The coarse Ricci
curvature will then be defined on pairs of points. To obtain a quantity for
the operator (\ref{cdci}), we need a function to evaluate. For submanifolds in
Euclidean space, we use a linear function whose gradient is the vector that
points from a point $x$ to a point $y$. On a general metric space $(X,d)$, given
$x,y\in X$ define
\[
f_{x,y}(z)=\frac{1}{2}\left(  d^{2}(x,y)-d^{2}(y,z)+d^{2}(z,x)\right)  .
\]
Note that if $X=\R^{N}$ and $d(x,y)=\|x-y\|_{N}$ is the Euclidean distance in $\R^{N}$ then 
\begin{equation}
f_{x,y}(z)=\langle y-x,z\rangle,\label{fxy}%
\end{equation}
where $\langle\cdot,\cdot\rangle$ is the Euclidean inner product in $\R^{N}$. This leads us to the following definition of coarse Ricci curvature,

\begin{definition}
Given an operator $L$ we define the coarse Ricci curvature for $L$ as
\begin{align}
\mathrm{Ric}_{L}(x,y)=\Gamma_{2}(L,f_{x,y},f_{x,y})(x),\label{RicL}
\end{align}

\end{definition}

Apart from the connection that we established above in \eqref{Boch} between $\Gamma_2$ and the Ricci curvature, there is a more important observation derived by Bakry and \'Emery in \cite{BE85} regarding the Carr\'{e} du Champ, that is, the bilinear form $\Gamma_2$ can be used to formulate properties of Ricci curvature lower bounds. Given an operator $L$ as before, we define a curvature-dimension condition on a smooth measure metric space $(X,g,\nu)$ in the following way: we say that $L$ satisfies the \emph{$CD(k,\mathcal{N})$
condition} if there exist measurable functions $k:X\rightarrow\R$ and $\mathcal{N}:X\rightarrow[1,\infty]$ such that for every $f$ defined on a set of functions dense in $L^{2}(X,\nu)$ the inequality
\begin{align}
    \Gamma_{2}(L,f,f)\ge\frac{1}{\mathcal{N}}(Lf)^2+k\Gamma(L,f,f)\label{CDKN}
\end{align}
holds. In this definition $k$ stands for curvature and $\mathcal{N}$ for dimension. On the other hand, when the measure $d\nu$ is given by $e^{-\vartheta}d\mathrm{vol}$ where $d\mathrm{vol}$ is the volume form corresponding to the metric $g$,  $\vartheta$ is a smooth function, and $X$ has dimension $n$, Bakry and \'Emery arrived at the following dimension and weight-dependent definition of Ricci curvature
\begin{align}
    \mathrm{Ric}_{\mathcal{N}}=\left\{
    \begin{array}{ll}\label{RicNN}
    \mathrm{Ric}_g+\mathrm{Hess}_{\vartheta}&\text{if}~\mathcal{N}=\infty\\
    \mathrm{Ric}_g+\mathrm{Hess}_{\vartheta}-\frac{1}{\mathcal{N}-n}(d\vartheta\otimes d\vartheta)&\text{if}~n<\mathcal{N}<\infty\\
    \mathrm{Ric}_g+\mathrm{Hess}_{\vartheta}-\infty(d\vartheta\otimes d\vartheta)&\text{if}~\mathcal{N}=n\\
    -\infty & \text{if}~ \mathcal{N}<n.
    \end{array}
    \right.
\end{align}
Bakry and \'Emery showed in \cite{BE85} the equivalence between the $CD(k,\mathcal{N})$ condition in \eqref{CDKN} and the bound $\mathrm{Ric}_{\mathcal{N}}\ge k$. We recall the main results from \cite{AW1}.

\begin{theorem}
\bigskip Consider a smooth measure metric space $(X^{n},g,e^{-\vartheta}d\mathrm{vol})$ and let $\Delta_{\vartheta}$ be the operator defined by 
\begin{align}
\Delta_{\vartheta}f=\Delta_{g}f-\langle\nabla \vartheta,\nabla f\rangle_{g}.
\end{align}
The operator $\Delta_{\vartheta}$ is called the weighted Laplacian for the measure $e^{-\vartheta}d\mathrm{vol}$, $\vartheta$ is smooth and $d\mathrm{vol}$ is the volume metric corresponding to the metric $g$. Let as before
\begin{align}
\mathrm{Ric}_{\infty}=\mathrm{Ric}_{g}+\mathrm{Hess}_{\vartheta}=\mathrm{Ric}_{g}+\nabla^{2}_{g}\vartheta.
\end{align}
Then
\begin{equation}
\mathrm{Ric}_{\infty}(\gamma^{\prime}\left(  0\right)  ,\gamma^{\prime}\left(
0\right)  )=\frac{1}{2}\frac{d^{2}}{ds^{2}}\mathrm{Ric}_{\Delta_{\vartheta}}%
(x,\gamma\left(  s\right)  )\bigg\rvert_{s=0}, \label{secondder}%
\end{equation}
where $\mathrm{Ric}_{\Delta_{\vartheta}}$ is defined in the sense of \eqref{RicL}
and%
\begin{equation}
\mathrm{Ric}_{\infty}\geq k
\end{equation}
if and only if%
\[
\mathrm{Ric}_{\Delta_{\vartheta}}(x,y)\geq kd^{2}(x,y).
\]

\end{theorem}

As mentioned in the introduction, the ambient distance squared function
osculates the intrinsic distance squared function only to third order on the
diagonal along the submanifold. So the above formula could manifest some
error terms. To side-step this, we appeal to the Bochner formula, which
in this case can be stated formally as
\[
\Gamma_{2}(\Delta_{g},f,f)=\mathrm{Ric}_{g}(\nabla f,\nabla f)+\Vert\nabla
^{2}_{g}f\Vert_{g}^{2}.
\]
We note that if we evaluate $\Gamma_{2}$ on functions with vanishing Hessian
at a point, we can recover the Ricci curvature exactly. For submanifolds of
Euclidean space $\R^{N}$, we normalize the functions (\ref{fxy}) to linear function
whose gradient is the \textit{unit} vector that points from a point $x$ to a
point $y$. \ In particular, given $x,y$
\[
F_{x,y}(z)=\frac{1}{2}\frac{\|x-y\|^{2}_{N}-\|y-z\|^{2}_{N}-\|x-y\|_{N}^{2}}{\|z-x\|_{N}},
\]
that is%
\begin{align}\label{FXY}
F_{x,y}(z)=\left\langle\frac{y-x}{\left\| y-x\right\|_{N} },z\right\rangle.
\end{align}
This leads us to the following definition of \textit{life-sized} coarse Ricci curvature.

\begin{definition}
Given an operator $L$ we define the life-sized coarse Ricci curvature for $L$
as
\[
\mathrm{RIC}_{L}(x,y)=\Gamma_{2}(L,F_{x,y},F_{x,y})(x).
\]

\end{definition}

As we will see, this also can be used to recover the Ricci curvature, without
taking any derivatives. \ \newline

\subsubsection{Approximations of the Laplacian, Carr\'{e} du Champ and its
iterate.}\label{approximation}

We now construct operators which can be thought of as approximations of the
Laplacian on metric measure spaces. This construction is a slight modification
of the approximation constructed by Belkin-Niyogi in \cite{BN08} and more
generally Coifman-Lafon in \cite{CoLaf06}. Consider a metric measure space
$(X,d,\nu)$ with the Borel $\sigma$-algebra such that $\nu(X)<\infty$. Given
$t>0$, let $\theta_{t}$ be given by
\begin{equation}
\theta_{t}(x)=\int_{X}e^{-\frac{d^{2}(x,y)}{2t}}d\nu(y).
\label{generaldensity}%
\end{equation}
We define a 1-parameter family of operators $L_{t}$ as follows: given a
function $f$ on $X$ define
\begin{equation}
L_{t}f(x)=\frac{2}{t\theta_{t}(x)}\int_{X}\left(  f(y)-f(x)\right)
e^{-\frac{d^{2}(x,y)}{2t}}d\nu(y), \label{Ltdefine}%
\end{equation}
for $f\in L^{2}(X,\nu)$. With respect to the operator $L_{t}$ one can define a Carr\'{e} du Champ on
$L^{2}(X,\nu)$ functions $f,h$ by
\begin{equation}
\Gamma(L_{t},f,h)=\frac{1}{2}\left(  L_{t}(fh)-(L_{t}f)h-f(L_{t}h)\right)  ,
\label{cdctdef}%
\end{equation}
which simplifies to
\begin{equation}
\Gamma(L_{t},f,h)(x)=\frac{1}{t\theta_{t}(x)}\int_{X}e^{-\frac{d^{2}(x,y)}%
{2t}}(f(y)-f(x))(h(y)-h(x))d\nu. \label{cdctsimp}%
\end{equation}
In a similar fashion we define the iterated Carr\'{e} du Champ of $L_{t}$ to
be
\begin{equation}
\Gamma_{2}(L_{t},f,h)=\frac{1}{2}\left(  L_{t}(\Gamma(L_{t},f,h))-\Gamma
(L_{t},L_{t}f,h)-\Gamma(L_{t},f,L_{t}h)\right)  . \label{cdc2t}%
\end{equation}
In this article we are interested mainly in the case $X=\Sigma^{d}$, where $\Sigma$ is a $d$-dimensional \emph{smooth} embedded submanifold of $\R^{N}$ and the metric $d$ is given by $d(x,y)=\|x-y\|_{N}$ where as before $\|\cdot\|_{N}$ is the Euclidean norm in $\R^{N}$. In other words, the metric that we will choose in $\Sigma$ is the restriction of the ambient metric to the submanifold $\Sigma^{d}$. In this case, even though $d(x,y)$ is given by the ambient distance, the geometry of $\Sigma$ is determined by the \emph{induced metric}, i.e. the metric $g_{\Sigma}$ induced by the embedding $F:\Sigma^{d}\rightarrow\R^{N}$. The operator $L_{t}f$ takes the form
\begin{align*}
    L_{t}f(x) =\frac{2}{t\cdot\theta_{t}(x)}\int_{\Sigma^{d}}e^{-\frac{\|x-y\|^2_{N}}{2t}}(f(y)-f(x))d\mu(y),
\end{align*}
where $d\mu$ is the volume form $\mathrm{vol}_{\Sigma}$ defined by the metric $g_{\Sigma}$, and $\theta_{t}(x)$ is given by 
\begin{align*}
    \theta_{t}(x)=\int_{\Sigma^{d}}e^{-\frac{\|x-y\|^{2}_{N}}{2t}}d\mu(y).
\end{align*}
The operators $\Gamma(L_t,f,h)$ and $\Gamma_{2}(L_{t},f,h)$ are defined accordingly, and observe in particular that 
\begin{align*}
 \Gamma(L_t,f,h)=\frac{1}{t\cdot\theta_{t}(x)}\int_{\Sigma^{d}}e^{-\frac{\|x-y\|^{2}_{N}}{2t}}(f(y)-f(x))(h(y)-h(x))d\mu(y).   
\end{align*}
\begin{remark} We make note of the following: 

\begin{itemize}

\item This definition of $L_{t}$ differs from the operator introduced by Belkin-Niyogi in \cite{BN08}, in that we
normalize by dividing by $\theta_{t}(x)$ instead of dividing by $(2\pi t)^{d/2}$ for an assumed
manifold dimension $d.$
\item For the case in question, i.e. the case when $X=\Sigma^{d}$ for $\Sigma^{d}$ a closed embedded submanifold of $\R^{N}$, $\nu=d\mu$ and $d(x,y)=\|x-y\|_{N}$ the operator $L_{t}$ maps $L^{2}(\Sigma,d\mu)$ to itself.  
\end{itemize}
\end{remark}

\subsection{Statement of Results}

We will consider a closed, smooth, embedded submanifold $\Sigma$ of
$\mathbb{R}^{N}$, and the metric measure space will be $(\Sigma^{d},\Vert
\cdot\Vert_{N},d\mathrm{vol})$, where

\begin{itemize}
\item $\|\cdot\|_{N}$ is the distance function in the ambient space $\mathbb{R}%
^{N}$,

\item $d\mathrm{vol}_{\Sigma}$ is the volume element corresponding to the
metric $g$ induced by the embedding of $\Sigma$ into $\mathbb{R}^{N}$.
\end{itemize}

In addition we will adopt the following conventions

\begin{itemize}
\item All operators $L_{t}$, $\Gamma(L_{t},\cdot,\cdot)$ and $\Gamma_{2}%
(L_{t},\cdot,\cdot)$ will be taken with respect to the distance $\Vert
\cdot\Vert_{N}$ and the measure $d\mathrm{vol}_{\Sigma}$.
\end{itemize}

The choice of the above metric measure space is consistent with the setting of
manifold learning in which no assumption on the geometry of the submanifold
$\Sigma$ is made, in particular, we have no a priori knowledge of the geodesic
distance and therefore we can only hope to use the chordal distance as a
reasonable approximation for the geodesic distance. We will show that while
our construction at a scale $t$ involves only information from the ambient
space, the limit as $t$ tends to $0$ will recover the life-size coarse Ricci
curvature of the submanifold with intrinsic geodesic distance. As pointed out
by Belkin-Niyogi \cite[Lemma 4.3]{BN08}, the chordal and intrinsic distance
functions on a smooth submanifold differ first at fourth order near a point ,
so while much of the analysis is done on submanifolds, the intrinsic geometry
will be recovered in the limit. We are able to show the following.

\begin{theorem}
\label{extrinsic bias} Let $\Sigma^{d}\subset\mathbb{R}^{N}$ be a closed
embedded submanifold, let $g$ be the Riemannian metric induced by the
embedding, and let $(\Sigma,\Vert\cdot\Vert_{N},d\mathrm{vol}_{\Sigma})$ be the
metric measure space defined with respect to the ambient distance. Suppose
that $f\in C^{4}\left(  \Sigma\right)  .$
Then there exists a constant $C_{1}$ depending on the geometry of $\Sigma$ and
the derivatives of the function $f$ up to order $4$ such that
\begin{align}
\sup_{x\in\Sigma}\left\vert \Gamma_{2}(\Delta_{g},f,f)(x)-\Gamma_{2}%
(L_{t},f,f)(x)\right\vert <C_{1}(\Sigma,\|f\|_{C^{4}(\Sigma)})\cdot t^{1/2}.\label{MainInequality}
\end{align}

\end{theorem}

Theorem \ref{extrinsic bias} will follow from Lemmas \ref{l319} and \ref{FDR}
which are proved in Section \ref{bias} (see also the remarks at the end of Section \ref{bias} where the proof of Theorem \ref{extrinsic bias} is given formally).

\begin{corollary}
\label{ricdelta} With the hypotheses of Theorem \ref{extrinsic bias} we have
\[
\mathrm{Ric}_{\Delta_{g}}(x,y)=\lim_{t\rightarrow0}\Gamma_{2}(L_{t}%
,f_{x,y},f_{x,y})(x).
\]

\end{corollary}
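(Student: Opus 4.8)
The plan is to obtain the Corollary as an immediate consequence of Theorem \ref{extrinsic bias}, the only genuine content being the verification that $f_{x,y}$ is an admissible test function. First I would unwind the definition of coarse Ricci curvature: for fixed $x,y\in\Sigma$ we have, by definition, $\mathrm{Ric}_{\Delta_g}(x,y)=\Gamma_2(\Delta_g,f_{x,y},f_{x,y})(x)$, so the assertion to be proved is precisely that $\Gamma_2(L_t,f_{x,y},f_{x,y})(x)$ converges to this quantity as $t\to0$.

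Next I would check that the fixed function $f=f_{x,y}$ satisfies the hypotheses of Theorem \ref{extrinsic bias}. By \eqref{fxy}, on $\Sigma\subset\mathbb{R}^N$ we have $f_{x,y}(z)=\langle y-x,z\rangle$, which is the restriction to $\Sigma$ of a linear, hence $C^{\infty}$, function on the ambient space. Since $\Sigma$ is closed (therefore compact) and smoothly embedded, the induced covariant derivatives of $f_{x,y}$ on $\Sigma$ up to fifth order are bounded in terms of the second fundamental form and its derivatives together with $|y-x|$; in particular the constant $C_1(\Sigma,D^{5}f_{x,y})$ produced by the theorem is finite. One small bookkeeping point is that although $f_{x,y}$ depends on the base point $x$, for the application we simply regard it as a fixed smooth function on $\Sigma$ and then evaluate the resulting estimate at the single point $x$, so the dependence introduces no circularity.

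With $f=f_{x,y}$ fixed, the pointwise difference at $x$ is bounded by the supremum over $\Sigma$, so Theorem \ref{extrinsic bias} yields
\[
\left|\Gamma_2(\Delta_g,f_{x,y},f_{x,y})(x)-\Gamma_2(L_t,f_{x,y},f_{x,y})(x)\right|<C_1(\Sigma,D^{5}f_{x,y})\,t^{1/2}.
\]
Letting $t\to0$, the right-hand side tends to $0$, which gives $\lim_{t\to0}\Gamma_2(L_t,f_{x,y},f_{x,y})(x)=\Gamma_2(\Delta_g,f_{x,y},f_{x,y})(x)=\mathrm{Ric}_{\Delta_g}(x,y)$, as desired.

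Since all of the analytic difficulty is already packaged into Theorem \ref{extrinsic bias}, I do not expect any substantive obstacle in this argument; the only step requiring attention is the finiteness of $C_1(\Sigma,D^{5}f_{x,y})$, which is routine precisely because $f_{x,y}$ is linear in the ambient coordinate and $\Sigma$ is compact.
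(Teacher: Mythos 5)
Your proof is correct and takes essentially the same route as the paper, which treats Corollary \ref{ricdelta} as an immediate consequence of Theorem \ref{extrinsic bias} applied to the fixed test function $f=f_{x,y}$ and the definition $\mathrm{Ric}_{\Delta_g}(x,y)=\Gamma_2(\Delta_g,f_{x,y},f_{x,y})(x)$. Your only added content --- checking that $f_{x,y}(z)=\langle y-x,z\rangle$ is the restriction of an ambient affine function, hence $C^{\infty}$ on the compact $\Sigma$ with $C_1(\Sigma,D^5 f_{x,y})$ finite, and that the sup bound specializes to the point $x$ --- is exactly the routine verification the paper leaves implicit.
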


Theorem \ref{extrinsic bias} applies to all functions on the manifold. \ To
obtain the life-size Ricci curvature we apply these to $F_{x,y}$ to obtain the
following. \ 

\begin{theorem}
\label{HereWeRecoverRicci} Let $\Sigma^{d}\subset\mathbb{R}^{N}$ be a closed
embedded submanifold, and let $g$ be the metric induced by the embedding. Let
$\gamma(s)$ be a unit speed geodesic in $\Sigma$ such that $\gamma(0)=x.$
There exist constants $C_{2},C_{3}$ depending on the geometry of $\Sigma$, the embedding coordinates of $\Sigma$ and bounds on the second derivatives of the second fundamental form of $\Sigma$ 
such that
\[
|\mathrm{Ric}_{g}(\gamma^{\prime}(0),\gamma^{\prime}(0))-\mathrm{RIC}_{L_{t}%
}(x,\gamma(s))|\leq C_{2}t^{1/2}+C_{3}s.
\]

\end{theorem}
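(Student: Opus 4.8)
The plan is to pass from the discrete operator $L_t$ to the Laplace--Beltrami operator via Theorem \ref{extrinsic bias}, and then to use the Bochner identity to reduce the claim to a first-order Taylor analysis of $F_{x,\gamma(s)}$ in the parameter $s$. First I would apply Theorem \ref{extrinsic bias} with $f=F_{x,\gamma(s)}$. The crucial observation is that $F_{x,\gamma(s)}$ is the restriction to $\Sigma$ of the \emph{affine} function $z\mapsto\langle a_s,z\rangle$, where $a_s=(\gamma(s)-x)/|\gamma(s)-x|$ is a unit vector; hence all of its ambient derivatives of order $\geq 2$ vanish and its first derivative has norm $1$, uniformly in $s$. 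Consequently the constant $C_1(\Sigma,D^5F_{x,\gamma(s)})$ may be taken independent of $s$, and Theorem \ref{extrinsic bias} gives
\[
\left|\Gamma_2(L_t,F_{x,\gamma(s)},F_{x,\gamma(s)})(x)-\Gamma_2(\Delta_g,F_{x,\gamma(s)},F_{x,\gamma(s)})(x)\right|\leq C_2 t^{1/2},
\]
which accounts for the first term. It then remains to show that $\Gamma_2(\Delta_g,F_{x,\gamma(s)},F_{x,\gamma(s)})(x)$ differs from $\mathrm{Ric}(\gamma'(0),\gamma'(0))$ by $O(s)$.

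For the second step I would invoke the Bochner formula quoted in the text, $\Gamma_2(\Delta_g,F,F)=\mathrm{Ric}(\nabla F,\nabla F)+\|\nabla^2F\|^2$, and treat the gradient and Hessian of $F=F_{x,\gamma(s)}$ at $x$ separately. The intrinsic gradient is the tangential projection $\nabla F(x)=P_x(a_s)$, so I would Taylor expand the chord. Writing $\gamma(s)=x+s\gamma'(0)+\tfrac{s^2}{2}\gamma''(0)+O(s^3)$ and using that $\gamma$ is a unit-speed geodesic---so that $\langle\gamma'(0),\gamma''(0)\rangle=0$ and, by the Gauss formula, $\gamma''(0)=\mathrm{II}(\gamma'(0),\gamma'(0))$ is \emph{normal} to $\Sigma$, where $\mathrm{II}$ denotes the second fundamental form---gives $|\gamma(s)-x|=s+O(s^3)$ and $a_s=\gamma'(0)+\tfrac{s}{2}\gamma''(0)+O(s^2)$. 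Projecting onto $T_x\Sigma$ annihilates the normal term $\gamma''(0)$, so $\nabla F(x)=\gamma'(0)+O(s^2)$, and by bilinearity $\mathrm{Ric}(\nabla F,\nabla F)(x)=\mathrm{Ric}(\gamma'(0),\gamma'(0))+O(s^2)$, with the implied constant controlled by $\sup_\Sigma|\mathrm{Ric}|$.

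The Hessian term is where the error in $s$ genuinely enters. Since $F$ is the restriction of an affine function with ambient gradient $a_s$ and vanishing ambient Hessian, the Gauss formula yields the intrinsic Hessian $\nabla^2 F(X,Y)=\langle a_s,\mathrm{II}(X,Y)\rangle=\langle a_s^{\perp},\mathrm{II}(X,Y)\rangle$, depending only on the normal component $a_s^{\perp}$. From the expansion above $a_s^{\perp}=\tfrac{s}{2}\gamma''(0)+O(s^2)=O(s)$, so $\|\nabla^2 F\|^2(x)=O(s^2)$, with constant governed by $\sup_\Sigma\|\mathrm{II}\|^2$. Combining the two contributions gives $\Gamma_2(\Delta_g,F,F)(x)=\mathrm{Ric}(\gamma'(0),\gamma'(0))+O(s^2)$; since $s$ ranges over a bounded set this is bounded by $C_3 s$, and a final triangle inequality with the first step produces the stated estimate.

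The main obstacle I anticipate is the uniformity of the constant in Theorem \ref{extrinsic bias}: the family $\{F_{x,\gamma(s)}\}$ degenerates as $s\to 0$, since the denominator $d(x,\gamma(s))$ in its definition tends to $0$, so one must verify that the \emph{normalized} function and every quantity entering $C_1$ stay bounded. The affine description $F_{x,\gamma(s)}(z)=\langle a_s,z\rangle$ with $|a_s|=1$ resolves this, but care is needed to confirm that the error analysis behind Theorem \ref{extrinsic bias} depends only on the ambient derivatives of $f$ (uniformly bounded here) together with the fixed geometry of $\Sigma$, and on no quantity that blows up as $\gamma(s)\to x$.
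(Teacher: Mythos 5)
Your proposal is correct, and while it shares the paper's skeleton (the same first step via Theorem \ref{extrinsic bias}/Corollary \ref{cor bias}, with uniformity in $s$ justified exactly as the paper does, by noting that $F_{x,\gamma(s)}$ is an ambient linear function with unit gradient restricted to $\Sigma$), your handling of the $O(s)$ term is genuinely different. The paper introduces the auxiliary function $f_{0}=\langle\gamma^{\prime}(0),\cdot\rangle$ and splits the intrinsic error in two: first a soft stability estimate showing $\Gamma_{2}(\Delta_{g},\cdot,\cdot)$ is Lipschitz in the $C^{3}$ norm of its arguments, combined with the chord bound $\left\Vert \tfrac{\gamma(s)-x}{\left\Vert \gamma(s)-x\right\Vert }-\gamma^{\prime}(0)\right\Vert \leq\kappa s$; second, the exact identity $\Gamma_{2}(\Delta_{g},f_{0},f_{0})(x)=\mathrm{Ric}(\gamma^{\prime}(0),\gamma^{\prime}(0))$, which follows from Bochner because the Hessian of a linear function whose gradient is tangent at $x$ vanishes there. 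You instead apply Bochner directly to $F_{x,\gamma(s)}$ and compute both terms geometrically: the Gauss formula gives $\nabla^{2}F(X,Y)=\langle a_{s}^{\perp},\mathrm{II}(X,Y)\rangle$, and the normality of $\gamma^{\prime\prime}(0)$ for a geodesic gives $a_{s}^{\perp}=O(s)$ and $P_{x}a_{s}=\gamma^{\prime}(0)+O(s^{2})$. Your route is more explicit and quantitatively sharper: it subsumes the paper's Hessian-vanishing observation as the special case $a^{\perp}=0$, and it yields an intrinsic error of order $s^{2}$ rather than $s$, whereas the paper's perturbation argument is softer (it needs only the first-order chord estimate and no expansion of the second fundamental form). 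One small point, which both you and the paper gloss: the Taylor expansions are valid only for small $s$, so to get the stated inequality for all $s$ one should add that for $s$ bounded away from zero both $\mathrm{Ric}(\gamma^{\prime}(0),\gamma^{\prime}(0))$ and $\Gamma_{2}(\Delta_{g},F_{x,\gamma(s)},F_{x,\gamma(s)})(x)$ are uniformly bounded (by Bochner, since $|a_{s}|=1$ and $\Sigma$ is compact), so the linear bound $C_{3}s$ holds trivially after enlarging the constant.
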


This will be proved in section \ref{riccisubmanifold}.

\subsubsection{Smooth Metric Measure Spaces and non-Uniformly Distributed
Samples}\label{SMMS}

Consider a smooth metric measure space $\left(  M,g,e^{-\vartheta}d\mathrm{vol}%
\right)  $ and let $\Delta_{\vartheta}$ be the operator
\[
\triangle_{\vartheta}u=\triangle_{g}u-\langle\nabla \vartheta,\nabla u\rangle_{g}.
\]
In \cite{CoLaf06}, the authors consider a family of operators $L_{t}^{\alpha}$
which converge to $\triangle_{2(1-\alpha)\vartheta}.$ Note that a standard computation
(cf. \cite[Page 384]{vill09}) gives
\begin{equation}
\Gamma_{2}(\triangle_{2(1-\alpha)\vartheta},f,f)=\frac{1}{2}\Delta_{g}\left\Vert
\nabla f\right\Vert _{g}^{2}-\langle\nabla \vartheta,\nabla\Delta_{g}f\rangle
_{g}+2(1-\alpha)\nabla_{g}^{2}\vartheta(\nabla f,\nabla f).\label{mm18}%
\end{equation}
We adapt \cite{CoLaf06} to the setting of embedded submanifolds of Euclidean space: Recall that
\begin{equation}
\theta_{t}(x)=\int_{\Sigma^{d}}e^{-\frac{\|x-y\|^{2}_{N}}{2t}}d\nu(y)\label{generaldensityalpha},
\end{equation}
where $d\nu$ is equivalent to $e^{-\vartheta}d\mathrm{vol}_{\Sigma}=e^{-\vartheta}d\mu$, and define, for $\alpha\in\lbrack0,1]$
\begin{equation}
\theta_{t,\alpha}(x)=\int_{\Sigma^{d}}e^{-\frac{\|x-y\|_{N}^2}{2t}}\frac{1}{\left[
\theta_{t}(y)\right]  ^{\alpha}}d\nu(y).\label{alpha1}%
\end{equation}
We can define the operator
\begin{equation}
L_{t}^{\alpha}f(x)=\frac{2}{t}\frac{1}{\theta_{t,\alpha}(x)}\int_{\Sigma^{d}}
e^{-\frac{\|x-y\|_{N}^2}{2t}}\frac{1}{\left[  \theta_{t}(y)\right]  ^{\alpha}%
}\left(  f(y)-f(x)\right)  d\nu(y)\label{alpha2},
\end{equation}
and again obtain bilinear forms $\Gamma(L_{t}^{\alpha},f,f)$ and $\Gamma
_{2}(L_{t}^{\alpha},f,f)$. We consider the metric measure space $(\Sigma^{d}
,\Vert\cdot\Vert_{N},e^{-\vartheta}d\mathrm{vol}_{\Sigma})$ where $\Sigma^{d}%
\subset\mathbb{R}^{N}$ is an embedded submanifold, $\Vert\cdot\Vert_{N}$ is the
ambient distance and $\varrho$ is a smooth function on $\Sigma$. We again take
all the operators $L_{t},\Gamma_{t}(L_{t},\cdot,\cdot)$ and $\Gamma_{2}%
(L_{t},\cdot,\cdot)$ with respect to the data of $(\Sigma,\Vert\cdot
\Vert_{N},e^{-\vartheta}d\mathrm{vol}_{\Sigma})$.

\begin{theorem}
\label{intrinsic bias} Let $\Sigma^{d}\subset\mathbb{R}^{N}$ be an embedded
submanifold and consider the smooth metric measure space $(\Sigma,\Vert
\cdot\Vert_{N},e^{-\vartheta}d\mathrm{vol}_{\Sigma})$. Let $f\in C^{4}(\Sigma)$ such that
$\Vert f\Vert_{C^{4}(\Sigma)}\leq M$. Let $g$ be the metric induced by the embedding of $\Sigma^{d}$ into $\R^{N}$. There exists a constant $C_{4}$ depending on the geometry of $\Sigma$, the embedding coordinates and bounds on $\|f\|_{C^{4}(\Sigma)}$ such that%
\begin{align*}
\sup_{\xi\in\Sigma}\left\vert \Gamma_{2}(L_{t}^{\alpha},f,f)(\xi)-\Gamma
_{2}(\triangle_{2(1-\alpha)\vartheta},f,f)(\xi)\right\vert \leq C_{4}(\Sigma,\|f\|_{C^{4}(\Sigma)})t^{1/2}.
\end{align*}

\end{theorem}

In particular, if the density function is positive and smooth, we can still
recover the Ricci curvature using $\alpha=1$ in the above expression. The proof of Theorem \ref{intrinsic bias} is similar (although much more
tedious) than the proof of Theorem \ref{extrinsic bias} and will be outlined in Section \ref{miscellaneous}. 

\section{Bias Error Estimates}\label{bias}

\subsection{Bias for Submanifold of Euclidean Space}

In this section we prove Theorem \ref{extrinsic bias}. The theorem will follow
from Lemmas \ref{l319} and \ref{FDR}, by (\ref{cdc2t}). Theorem \ref{extrinsic bias} has an alternative formulation in \cite[Proposition 3.2]{AW}, and as mentioned in the introduction, the proof of this proposition was deferred to this article. We now present an improved version of  \cite[Proposition 3.2]{AW} which of course implies \cite[Proposition 3.2]{AW}.  For simplicity we
will assume that $(\Sigma,d\mathrm{vol}_{\Sigma})$ has unit volume. We will also use $d\mu$ to denote the volume form $d\mathrm{vol}_{\Sigma}$. Recall the
definitions (\ref{generaldensity}), (\ref{Ltdefine}), \eqref{cdctdef},
\eqref{cdctsimp} and \eqref{cdc2t}.
\begin{proposition}[See Proposition 3.2 in \cite{AW}]\label{tconvergence} 
Suppose that
$\Sigma^{d}$ is a closed, embedded, unit volume submanifold of $\mathbb{R}%
^{N}$. Let also $g$ be the metric induced by the embedding of $\Sigma^{d}$
into $\mathbb{R}^{N}$. For any $x$ in $\Sigma$ and for any functions $f,h$ of class 
$C^{5}(\Sigma)$ we have
\begin{align}
\frac{(2\pi t)^{d/2}}{\theta_{t}(x)}  &  =1+tG_{1}(x)+R_{2}(t,x)%
(x),\label{densityt}\\
\Gamma(L_{t},f,h)(x)  &  =\langle\nabla_{\Sigma} f(x),\nabla_{\Sigma}
h(x)\rangle_{g}\label{cdct}\\
&  +tG_{1}(x,J^{3}(f)(x),J^{3}(h)(x))+R_{2}(t,x,J^{4}f(x),J^{4}%
h(x)),\nonumber\\
L_{t}f(x)  &  =\Delta_{g}f(x)+tG_{1}%
(x,J^{4}f(x))+R_{2}(t,x,J^{5}f(x)), \label{Ltexpand}%
\end{align}
and
\begin{equation}
\Gamma_{2}(L_{t},f,f)(x)=\Gamma_{2}(\Delta_{g},f,f)(x)+t^{1/2}R%
(t,x,J^{5}f(x)) \label{gamma2approx}%
\end{equation}
where each $G_{i}$ is a locally defined function, which is smooth in its
arguments, and $J^{k}(u)$ is a locally defined $k$-jet of the function $u$.
Also, we have the following bounds for the remainder terms above:
\begin{align*}
    \sup_{x\in\Sigma}\left|R_{2}(t,x)\right|\le C_{\Sigma}\cdot t^{2},
\end{align*}
\begin{align*}
    \sup_{x\in\Sigma}\left|R_{2}(t,x,J^{4}f(x),J^{4}h(x))\right|\le C(\Sigma,\|f\|_{C^{4}(\Sigma)},\|h\|_{C^{4}(\Sigma)})\cdot t^2,
\end{align*}
\begin{align*}
    \sup_{x\in\Sigma}\left|R_{2}(t,x,J^{5}f(x))\right|\le C(\Sigma,\|f\|_{C^{4}(\Sigma)})\cdot t^2,
\end{align*}
\begin{align*}
    \sup_{x\in\Sigma}\left|R(t,x,J^{5}f(x))\right|\le C(\Sigma,\|f\|_{C^{5}(\Sigma)}),
\end{align*}
where each constant of the form
\begin{align}
C(\Sigma,\|f_1\|_{C^{k_1}(\Sigma)},\ldots,\|f_m\|_{C^{k_m}(\Sigma)})
\end{align}
depends on the embedding coordinates, the geometry of $\Sigma$, and bounds on the norms $\|f_1\|_{C^{k_1}(\Sigma)},\ldots,\|f_{m}\|_{C^{k_m}(\Sigma)}$ for tensors $f_1,\ldots,f_{m}$  as in \eqref{MainInequality}.
\end{proposition}

The proof of Proposition \ref{tconvergence} is given at the end of this section. We will devote the rest of this section to derive the estimates that are necessary to prove Proposition \ref{tconvergence}, however the actual proof of the proposition will be given towards the end of the section. The technical discussions in this section will also yield a proof of Theorem \ref{intrinsic bias}. The general strategy for proving Proposition \ref{tconvergence} will comprise two steps. The \emph{first step} will consist of determining a number $\varepsilon$
depending on the intrinsic and extrinsic geometry of $\Sigma$, such the
ambient and geodesic balls of radius $\varepsilon$ are comparable, and both have
trivial topology and in particular a choice of orthonormal tangent frame on
the ball. We then cover the compact manifold by a finite collection of balls
of radius $\varepsilon,$ which we call $\left\{  \Omega_{i}\right\}  $. The \emph{second step} will consist of an analysis on any $\Omega_{i}.$ The goal is to split the
quantities that go into the expression (\ref{generaldensity}), (\ref{Ltdefine}%
), etc., each into a local quantity determined inside $\Omega_{i}$ and an
error term that exponentially decays as $t$ vanishes. The statement of our
main results, Lemmas \ref{l319} and \ref{FDR}, are point-wise local statements
that hold on each $\Omega_{i}$. There is a finite set of $\Omega_{i}$ so one
can make the estimates global by taking worst error terms over the finite set
of $\Omega_{i}$.

\begin{remark} \label{compactspace} Note that we choose compact for simplicity, however, if we assume the manifold
is locally compact and has polynomial volume growth at infinity, the following
analysis should work when carefully applied locally on compact sets.
\end{remark}

\subsection{Finding sufficiently small balls to work on}

First we note that, by a covering argument there will be some $\varepsilon
_{0}>0$ such that all balls of radius $\varepsilon_{0}$ admit an orthonormal
frame for the tangent space. Let $\varepsilon_{1}$ be the injectivity radius
of the manifold, and let $\varepsilon_{2}$ be such that all ambient balls of
radius $2\varepsilon$ intersect the manifold in topological balls for each
$\varepsilon<\varepsilon_{2}.$

Now let $x$ lie at the center of a geodesic ball of radius less than
$\varepsilon_{1}$. In this case for all points $y$ such that $d_{\Sigma
}(x,y)<\varepsilon_{1}$ we may write $y=\exp_{x}(z)$ for $z$ in
$B_{\varepsilon_{1}}^{d}(0)$, i.e., the ball with radius $\varepsilon_{1}$
centered at $0$ in $\mathbb{R}^{d}$ (after identifying $T_{x}\Sigma$ with
$\mathbb{R}^{d}$). Letting $\Vert z\Vert_{d}$ denote the Euclidean norm in
$\mathbb{R}^{d}$, it follows that $d_{\Sigma}(x,y)=\Vert z\Vert_{d}$. On the
other hand, we clearly have $d_{\Sigma}(x,y)\geq\Vert x-y\Vert_{N}$ and we
therefore obtain the relation
\begin{equation}
\Vert z\Vert_{d}^{2}=\Vert x-\exp_{x}(z)\Vert_{N}^{2}+\phi(z),
\label{define_phi}%
\end{equation}
where $\phi(z)$ is a non-negative function that is smooth whenever $\Vert
z\Vert_{d}<\varepsilon_{1}.$ The following is found in \cite{BN08}.

\begin{lemma}
[\cite{BN08}]\label{lemGeo} Let $\Sigma^{d}$ be a smooth closed embedded
submanifold of $\mathbb{R}^{N}$ and let $x$ be a point in $\Sigma$. Let
$d_{\Sigma}$ denote the geodesic distance corresponding to the induced metric
on $\Sigma$. Then,
\begin{align*}
d^{2}_{\Sigma}(x,y)-\|x-y\|_{N}^{2}=O(d^{4}_{\Sigma}(x,y)).
\end{align*}
In particular there exists $C_{\Sigma}$ depending only on the embedding
coordinates of $\Sigma$ such that given $z$ in any exponential chart we have
\begin{equation}
\phi(z)\le C_{\Sigma}\|z\|^{4}_{d} \label{BNBound}%
\end{equation}

\end{lemma}

We define $\varepsilon_{3}$ in the following corollary:

\begin{corollary}
\label{exp_phi} Choose normal coordinates $z$ at a point $x_{0}$. \ There
exists $\varepsilon_{3}>0$ with $\varepsilon_{3}$ strictly less than the
injectivity radius of $\Sigma$ with respect to the induced metric such that
for any $t>0$ and for all $x$ in $\Sigma$ and all $y=$ $\exp_{x_{0}}(z)$ in
$B_{\varepsilon_{3}}(x)$ one has
\[
\left\vert e^{-\frac{\Vert x_{0}-\exp_{x_{0}}z\Vert_{N}^{2}}{2t}}%
-e^{-\frac{\Vert z\Vert_{d}^{2}}{2t}}\left(  1+\frac{\phi(z)}{2t}\right)
\right\vert \leq\frac{1}{2}\left(  \frac{\phi(z)}{2t}\right)  ^{2}%
e^{-\frac{\Vert z\Vert_{d}^{2}}{4t}}.
\]

\end{corollary}

\begin{proof}
We start by writing
\begin{align}
e^{\frac{\phi(z)}{2t}}-1  &  =\int_{0}^{1}\frac{d}{ds}\left(  e^{\frac
{s\phi(z)}{2t}}\right)  ds\nonumber\\
&  =\frac{\phi(z)}{2t}\int_{0}^{1}e^{\frac{s\phi(z)}{2t}}ds\nonumber\\
&  =\frac{\phi(z)}{2t}+\frac{\phi(z)}{2t}\int_{0}^{1}(1-s)\frac{d}{ds}%
e^{\frac{s\phi(z)}{2t}}ds\nonumber\\
&  =\frac{\phi(z)}{2t}+\left(  \frac{\phi(z)}{2t}\right)  ^{2}\int_{0}%
^{1}(1-s)e^{s\frac{\phi(z)}{2t}}ds. \label{Jordan P}%
\end{align}
It follows that
\begin{align*}
e^{-\frac{\Vert x-\exp_{x_{0}}z\Vert_{N}^{2}}{2t}}  &  =e^{-\frac{\Vert
z\Vert_{d}^{2}}{2t}}e^{\frac{\phi(z)}{2t}}\\
&  =e^{-\frac{\Vert z\Vert_{d}^{2}}{2t}}\left(  1+\frac{\phi(z)}{2t}\right)
+e^{-\frac{\Vert z\Vert_{d}^{2}}{2t}}\left(  \frac{\phi(z)}{2t}\right)
^{2}\int_{0}^{1}(1-s)e^{\frac{s\phi(z)}{2t}}ds\\
&  \leq e^{-\frac{\Vert z\Vert_{d}^{2}}{2t}}\left(  1+\frac{\phi(z)}%
{2t}\right)  +\frac{1}{2}\left(  \frac{\phi(z)}{2t}\right)  ^{2}%
e^{-\frac{\Vert z\Vert_{d}^{2}}{2t}+\frac{\phi(z)}{2t}}%
\end{align*}
and since for $\Vert z\Vert_{d}^{2}\leq\varepsilon_{1}$ we have $|\phi(z)|\leq
C_{\Sigma}\Vert z\Vert_{d}^{4}$ for $C_{\Sigma}>0$ depending only on the
embedding coordinates of $\Sigma,$ we can choose $\varepsilon_{3}>0$
sufficiently small so that
\[
e^{-\frac{\Vert z\Vert_{d}^{2}}{2t}+\frac{\phi(z)}{2t}}\leq e^{-\frac{\Vert
z\Vert_{d}^{2}}{4t}},
\]
and this proves the lemma.
\end{proof}

The following notation will be used for the remainder of the article.

\begin{notation}
We have the following

\begin{itemize}
\item $B_{r}^{d}(0)$ denotes a ball centered at the origin in $\mathbb{R}^{d}$
\ (which is identified with the tangent space at any point.)\ 
\item Recall that we have used $\Vert\cdot\Vert_{d}$ to denote the Euclidean norm in $\mathbb{R}^{d}.$
\item We will use $g_{\Sigma}$ to denote the induced metric of $\Sigma$.
\end{itemize}
\end{notation}

At this point we can take a cover of the manifold by balls of radius
\begin{equation}
\varepsilon=\frac{1}{2}\min\{\varepsilon_{0},\varepsilon_{1},\varepsilon
_{2},\varepsilon_{3}\}. \label{epschoice}%
\end{equation}
A quantity that will often appear in the estimates to come is the following
number
\begin{equation}
d_{\varepsilon}=\inf_{x\in\Sigma}\left\{  \inf_{y\in\Sigma\setminus
B_{\varepsilon}(x)}\Vert x-y\Vert_{N}\right\}. \label{useful}%
\end{equation}

It is clear that $d_{\varepsilon}>0$ which is an observation that will be very useful
in the sequel. Extract a finite subcover by such balls, which we call $\left\{  \Omega
_{i}\right\}  $. It follows that for each $\Omega_{i}$ there exists a ball of
radius $2\varepsilon$ such that the geodesic ball of radius $\varepsilon$
around any point in $\Omega_{i}$ is contained in this ball. On $\Omega_{i}$
the inequality (\ref{BNBound}) holds. Now fixing the local smooth orthonormal
tangent frame on each $\Omega_{i}$, we have a smooth map
\begin{equation}
\Omega_{i}\times B_{\varepsilon}^{d}(0)\rightarrow\Sigma
\end{equation}
given by
\begin{equation}
(x,z)\mapsto\exp_{x}(\vec{z}) \label{localmap}%
\end{equation}
where
\[
z=(z^{1},z^{2},...,z^{d})\in B_{\varepsilon}^{d}(0)
\]
is identified with the tangent vector
\[
\vec{z}=\sum_{i=1}^{d}z^{i}e_{i}(x)
\]
for the smooth orthonormal frame
\[
\{e_{1},e_{2},...,e_{d}\}.
\]

From Lemma \ref{lemGeo} \cite{BN08} we see that all computations involving
local expansions of functions will be greatly simplified by the use of normal
coordinates near a point. The fixed choice of tangent frame above allows us to
take derivatives and perform expansions in each chart (that is, the $z$ coordinates), in a way that after integrating with respect to $z$, we obtain an expression that can be differentiated with respect to $x$. This will be made clear in Section \ref{expansions}. We now set our conventions regarding the use of normal coordinates and expansions of functions near a point in terms of
their \emph{jets}. To be more precise, consider one of the local maps of the
form (\ref{localmap}). Given a function $f:\Sigma\rightarrow\mathbb{R}$
sufficiently differentiable, we let $\tilde{f}:\Omega_{i}\times B_{\varepsilon
}^{d}(0)\rightarrow\mathbb{R}$ be given by
\[
\tilde{f}(x,z)=f(\exp_{x}(z)).
\]
We will also use the following convention regarding the Taylor expansion of a
function defined on $\Sigma$ in normal coordinates near $x$. Given
$f:\Sigma\rightarrow\mathbb{R}$ in $C^{k+1}(\Sigma)$, a point $x\in\Sigma$ and
a geodesic ball $B_{\varepsilon}(x)$ as before, we have the following
expansion for $\tilde{f}$ in $B_{\varepsilon}^{d}(0)$
\begin{align*}
\tilde{f}(x,z)-\tilde{f}(x,0)=\sum_{j=1}^{k}\frac{1}{j!}D_{z}^{j}\tilde
{f}(x,0)[\underbrace{z,\ldots,z}_{j}]+\varrho_{k+1}(\tilde{f},x)[z],
\end{align*}
where each term $D_{z}^{j}\tilde{f}(x,0)[\underbrace{z,\ldots,z}_{j}]$ is a
$j$-linear form in $z$ whose coefficients are computed in terms of the $j$-th
derivatives of $f$ evaluated at $0$, and where $\varrho_{k+1}(\tilde{f},x)[z]$
satisfies a bound of the form
\begin{align}
\left\vert \varrho_{k+1}(\tilde{f},x)[z]\right\vert \leq C_{k+1}\frac{\Vert
f\Vert_{C^{k+1}(\Sigma)}}{(k+1)!}\Vert z\Vert_{d}^{k+1},
\end{align}
where $C_{k+1}$ depends on $C^{k+1}(\Sigma)$ bounds on the metric $g_{\Sigma}%
$. In fact we have
\begin{align}
\varrho_{k+1}(\tilde{f},x)[z]=\frac{(-1)^{k}}{k!}\int_{0}^{1}(1-s)^{k}%
D^{k+1}\tilde{f}(x,sz)[\underbrace{z,\ldots,z}_{k+1}]ds.
\end{align}

\begin{notation}
\label{polynote}For multilinear forms already in the abbreviated form
$D_{z}^{j}\tilde{f}(x,0)[\underbrace{z,\ldots,z}_{j}]$ we will abbreviate
further by terms of the form
\begin{align}\label{abbr}
D_{z}^{j}\tilde{f}(x,0)[z]:=\frac{1}{j!}D_{z}^{j}\tilde{f}
(x,0)[\underbrace{z,\ldots,z}_{j}].
\end{align}
\end{notation}
The choice of notation in \eqref{abbr} is to avoid the proliferation of factorials and fractions in the expansions to come. We also have the following.

\begin{definition}
Given a local map of the form (\ref{localmap}) and a $k$-differentiable
function $f$, we define the $k$-jet of $f$ at $x$ as
\[
J^{k}f(x)=\{D_{z}^{j}\tilde{f}(x,0):j=0,...,k\}.
\]

\end{definition}

Note that this depends on the local map (\ref{localmap}) and is not canonical,
as it depends on a choice of local frame. \ \ While the definition does depend
on the particular $\Omega_{i}$, we choose to suppress any dependence. \ Note
also that by $\|f\|_{C^{k}(\Sigma)}$ we mean the norm
\begin{align*}
    \|f\|_{C^{k}(\Sigma)}=\sum_{j=0}^{k}\|\nabla_{g_{\Sigma}}^{j}f\|_{C^{0}(\Sigma)} 
\end{align*}
where $\nabla^{j}_{g_{\Sigma}}$ denotes covariant derivatives with respect to the induced metric $g_{\Sigma}$. On the other hand, we remark that we may without loss of generality replace $\|f\|_{C^{k}(\Sigma)}$ by 
\begin{align}
\sum_{j\leq k}\sum_{i}\sup
_{x\in\Omega_{i}}\left\Vert D_{z}^{j}\tilde{f}(x,0)\right\Vert_{j,d}\label{tensorNorm}
\end{align}
where $\left\Vert \cdot\right\Vert_{j,d}$ in the $j\text{-}$th term in \eqref{tensorNorm} is used to denote the canonical norm on $\displaystyle{\underbrace{\bigotimes}_{j~\text{times}}\R^{d}}$.
With this set up, we expand the function $e^{\frac{\phi(z)}{2t}}.$ \ To begin,
the following is a corollary of Lemma \ref{lemGeo}.

\begin{claim}
\label{phicoeff}Let $\phi$ be the function in \eqref{define_phi} and suppose
that we are working in normal coordinates in the geodesic ball $B_{\varepsilon
}(x)$. Then we have an expansion of the form
\begin{equation}
\phi(z)=\sum_{j=4}^{k}p_{j}(\phi,x)[z]+\varrho_{k+1}(\phi,x)[z],
\label{phiexpansion}%
\end{equation}
where $p_{j}(\phi,x)[z]$ for $j=4,\ldots,k$, are homogeneous polynomials of degree $j$ and
$\varrho_{k+1}(\phi,x)[z]$ satisfies a bound of the form $\left\vert
\varrho_{k+1}(\phi,x)[z]\right\vert \leq C_{k+1}\Vert z\Vert_{d}^{k+1}$ where
$C_{k+1}$ is a constant that depends on the embedding coordinates of $\Sigma$
and $C^{k+1}(\Sigma)$ bounds on the induced metric $g_{\Sigma}$.
\end{claim}

For practical purposes, we will only expand $\phi(z)$ to fifth order, i.e. we
will use the expansion
\[
\phi(z)=p_{4}(\phi,x)[z]+p_{5}(\phi,x)[z]+\varrho_{6}(\phi,x)[z].
\]

In view of (\ref{Jordan P}) we have the identity
\[
e^{\frac{\phi(z)}{2t}}=1+\frac{\phi(z)}{2t}+\left(  \frac{\phi(z)}{2t}\right)
^{2}\int_{0}^{1}(1-s)e^{\frac{s\phi(z)}{2t}}ds.
\]
If we let
\begin{equation}
\varrho(\phi,x,t)[z]=\left(  \frac{\phi(z)}{2t}\right)  ^{2}\int_{0}%
^{1}(1-s)e^{\frac{s\phi(z)}{2t}}ds \label{Nestor}%
\end{equation}
and if again $z$ is in $B_{\varepsilon}^{d}(0)$ with $\varepsilon>0$
sufficiently small we have the bound
\begin{equation}
e^{-\frac{\Vert z\Vert_{d}^{2}}{2t}}\left\vert \varrho(\phi,x,t)[z]\right\vert
\leq C_{\Sigma}^{2}\frac{\Vert z\Vert_{d}^{8}}{8t^{2}}e^{-\frac{\Vert
z\Vert_{d}^{2}}{4t}}. \label{order8}%
\end{equation}
Here we have used 
\begin{eqnarray*}
e^{\frac{^{-\left\Vert z\right\Vert _{d}^{2}}}{2t}}e^{s\frac{^{\phi (z)}}{2t}%
} &\leq &e^{\frac{^{-\left\Vert z\right\Vert _{d}^{2}}}{2t}}e^{\frac{%
^{C_{\Sigma }\left\Vert z\right\Vert _{d}^{4}}}{2t}}\leq e^{\frac{%
^{-2\left\Vert z\right\Vert _{d}^{2}+2C_{\Sigma }\left\Vert z\right\Vert
_{d}^{4}}}{4t}} \\
&\leq &e^{\frac{^{-\left\Vert z\right\Vert _{d}^{2}}}{4t}}e^{\frac{%
^{-\left\Vert z\right\Vert _{d}^{2}+2C_{\Sigma }\left\Vert z\right\Vert
_{d}^{4}}}{4t}\leq }e^{\frac{^{-\left\Vert z\right\Vert _{d}^{2}}}{4t}}\text{
}
\end{eqnarray*}%
which holds for small $z$ where  
\[
-\left\Vert z\right\Vert _{d}^{2}+2C_{\Sigma }\left\Vert z\right\Vert
_{d}^{4}\leq 0.
\]

The bound \eqref{order8} together with \eqref{phiexpansion} suggests
considering, as before, expansions of the form
\begin{equation}
e^{\frac{\phi(z)}{2t}}=1+\frac{1}{2t}\left(  p_{4}(\phi,x)[z]+p_{5}%
(\phi,x)[z]\right)  +\frac{\varrho_{6}(\phi,x)[z]}{2t}+\varrho(\phi,x,t)[z].
\label{l98}%
\end{equation}

Similarly, the following claim is also true (cf. \cite{LP87}[Lemma 5.5]).

\begin{claim}
\label{coeff} Given the map (\ref{localmap}) let $d\mu(x,z)$ be volume form of
$g_{\Sigma}$ with respect to the exponential chart for a fixed $x$. Then for
any $k\geq2$ we have an expansion of the form
\begin{equation}
d\mu(x,z)=\left(  1+p_{2}(d\mu,x)[z]+\ldots+p_{k}(d\mu,x)[z]+\varrho
_{k+1}(d\mu,x)[z]\right)  dz \label{volE1}%
\end{equation}
where each $p_{j}(d\mu,x)[z]$ is a homogeneous polynomial in $z$ of degree $j$
(whose coefficients are smooth functions of $x$) and the remainder term
$\varrho_{k+1}(d\mu,x)[z]$ satisfies a bound of the form
\[
\left\vert \varrho_{k+1}(d\mu,x)[z]\right\vert \leq C_{k+1}\left\Vert
z\right\Vert _{d}^{k+1}%
\]
and $C_{k+1}$ is independent of $x$.
\end{claim}

The coefficients of the polynomials $p_{j}(d\mu,x)[z]$ have an interesting
interpretation in terms of curvature and its derivatives. Even though we will
not need this interpretation, it is worthwhile to mention the following

\begin{remark}
\label{remp2mu} We have the expression
\[
p_{2}(d\mu,x)[z]=-\frac{(R_{ij})_{x}z^{i}z^{j}}{3}%
\]
where $(R_{ij})_{x}$ are the components of the Ricci Curvature of $g_{\Sigma}$
at $x$. An excellent reference for these expansions is \cite[Section 5]{LP87}.
\end{remark}

\begin{remark}
 We remark one of the key conclusions from Claims \ref{phicoeff} and
\ref{coeff} which will become evident in future expansions: The intrinsic and extrinsic curvatures of the metric $g_{\Sigma}$ first appear in the expansion of the integrand
\begin{align*}
e^{-\frac{\Vert x-\exp_{x_{0}}z\Vert_{N}^{2}}{2t}}d\mu(z),
\end{align*}
as coefficients of those terms that have order $O(\left\Vert z\right\Vert _{d}^{2})$ and $O(\left\Vert z\right\Vert
_{d}^{4}/t),$\ in normal coordinates. We see below that after an appropriate rescaling these terms are essentially of the same order in $t$.
\end{remark}

\subsection{Summary of notation}

We conclude this section with a summary of the notations that we have
introduced so far (see Table \ref{table1}).

\begin{table}[h]
\centering
\begin{tabular}
[c]{|c|c|}\hline
\text{Notation} & \text{Meaning}\\\hline
& \\
$\mathbb{R}^{N}$ & Ambient space for the submanifold $\Sigma$\\
& \\
$g_{\Sigma}$ & Induced metric of the embedded submanifold $\Sigma
\hookrightarrow\mathbb{R}^{N}$\\
& \\
$\|\cdot\|_{N}$ & \text{Ambient distance}\\
& \\
$d$ & \text{Dimension of}~$\Sigma^{d}$\\
& \\
$\|\cdot\|_{d}$ & \text{Euclidean distance on $\mathbb{R}^{d}$}\\
& \\
$d\mu$ & \text{Volume form associated to the induced metric $g_{\Sigma}$ (denoted also by $d\mathrm{vol}_{\Sigma}$)}\\
       & \\
$d_{\Sigma}$ & \text{Geodesic distance on} $\Sigma$ with respect to the metric
$g_{\Sigma}$\\
& \\
$B^{d}_{\varepsilon}(0)$ & \text{Euclidean ball of radius}~$\varepsilon
$~\text{centered at}~$0$\\
& \\
$B_{\varepsilon}(x)$ & \text{Geodesic distance ball centered at}~$x$~of radius
$\varepsilon$\\
& \\
$z$ & Geodesic coordinates (in a normal coordinate neighborhood)\\
& \\
$\phi(z)$ & $d^{2}_{\Sigma}(x,y)-\|x-y\|^{2}_{N}$~for $y=\exp_{x}(z)$\\
& \\
$p_{k}(\phi,x)[z]$ & Homogeneous polynomial of degree $k$ depending\\
& on derivatives of $\phi$ at $x$ in normal coordinates\\
& \\
$p_{k}(d\mu,x)[z]$ & Homogenous polynomial of degree $k$ depending on\\
& derivatives of the volume form $d\mu$ at $x$ in normal coordinates\\
& \\
$\varrho_{k}(f,x)[z]$ & Error of order $k$ in the Taylor expansion of\\
& $f(y)-f(x)$ for $y=\exp_{x}(z)$\\
& \\
$\varrho(\phi,x,t)[z]$ & remainder term appearing in the expansion of
$e^{\frac{\phi(z)}{2t}}$ (see \eqref{Nestor})\\
& \\
$J^{k}f(x)$ & The $k$-jet of a function $f$ under a map of the form 
(\ref{localmap})\\
&\\
$\|f\|_{\mathrm{Lip}}$ & Lipschitz semi-norm defined in \eqref{lip}\\
& \\
\hline
\end{tabular}
\caption{Table of Notation}%
\label{table1}%
\end{table}

\subsection{Expansions}\label{expansions}

The follow general expansion will be extremely important. Before stating the expansion, we will adopt the following convention: the volume element is locally determined by its density function $\sqrt{\det(g_{\Sigma})}$, in particular in the $z$ coordinates we have 
\begin{align*}
    d\mu=\sqrt{\det(g_{\Sigma})}dz.
\end{align*}
We will therefore will not distinguish between $d\mu$ and its density function $\sqrt{\det(g_{\Sigma})}$ and whenever we write the jets $J^{k}d\mu$ we will actually mean the corresponding $k$-jets on the density function.

\begin{lemma}
\label{BigNasty}Let $\tilde{w}(x,z)=w(\exp_{x}z)$ on $\Omega_{i}\times
B_{\varepsilon}^{d}(0)$ for a choice of orthonormal frame and suppose $w$ $\in
C^{5}(\Sigma)$. \ Then
\begin{align}
&  \frac{1}{(2\pi t)^{d/2}}\int_{B_{\varepsilon}^{d}(0)}e^{-\frac{\Vert
x-\exp_{x}z\Vert_{N}^{2}}{2t}}\tilde{w}(x,z)d\mu(z)\nonumber\\
&  =\frac{1}{(2\pi)^{d/2}}\int_{%
\mathbb{R}
^{d}}e^{-\frac{\Vert\zeta\Vert_{d}^{2}}{2}}\left\{  \tilde{w}(x,0)+W_{1}%
(x,\zeta)t+W_{2}(x,\zeta)t^{2}\right\}  d\zeta\label{GG}\\
&  +\tilde{w}(x,0)\Phi(t^2;\phi)(x)+O(t^{5/2};\Sigma,J^{5}w,J^{5}\phi,J^{5}d\mu)\nonumber
\end{align}
where
\begin{align*}
W_{1}(x,\zeta)  &  =D_{z}^{2}\tilde{w}(x,0)[\zeta]+\tilde{w}(x,0)p_{2}%
(d\mu,x)[\zeta]+\tilde{w}(x,0)\frac{p_{4}(\phi,x)}{2}[\zeta]\\
\end{align*}
\begin{align*}
W_{2}(x,\zeta)  &  =\left\{
\begin{array}
[c]{c}%
\begin{array}
[c]{c}%
D_{z}^{4}\tilde{w}(x,0)[\zeta]+D_{z}^{2}\tilde{w}(x,0)\otimes p_{2}%
(d\mu,x)[\zeta]\\
+D_{z}^{1}\tilde{w}(x,0)\otimes p_{3}(d\mu,x)[\zeta]+\tilde{w}(x,0)p_{4}%
(d\mu,x)[\zeta]\\%
\begin{array}
[c]{c}%
+\tilde{w}(x,0)p_{4}(\phi,x)\otimes p_{2}(d\mu,x)[\zeta]\\
+p_{4}(\phi,x)\otimes D_{z}^{2}\tilde{w}(x,0)[\zeta]\\
+p_{5}(\phi,x)\otimes D_{z}\tilde{w}(x,0)[\zeta]
\end{array}
\end{array}
\end{array}
\right\}
\end{align*}
and%
\begin{align}
\left\vert \Phi(t^2;\phi)(x)\right\vert \leq Ct^{2}\label{defPhi}
\end{align}
for a constant $C$ depending only on the geometry of $\Sigma$ and $\phi, $and

\[
\left\vert O(t^{5/2};\Sigma,J^{5}w,J^{5}\phi,J^{5}d\mu)\right\vert \leq Ct^{5/2}%
\]

is a function that satisfies a bound 
for a (different)$\ C$ that depends on $\Sigma$ and the 5-jets of the function
$w,\phi$ and the volume density $d\mu$ in terms of normal coordinate expansions.
\end{lemma}

We will prove Lemma \ref{BigNasty} in the appendix. Note that the statement of Lemma \ref{BigNasty} involves the term $O(t^{5/2};J^{5}w,J^{5}\phi,J^{5}d\mu)$. In general we will use the notation 
\begin{align}\label{generalO}
    O(\psi;\Sigma,J^{k_1}f_1,\ldots,J^{k_n}f_{n}),
\end{align}
to denote a function $u$ that satisfies at every point a bound of the form
\begin{align*}
    \left|u\right|\le C\cdot \left|\psi\right|,
\end{align*}
where $C$ depends on depends on the embedding coordinates, the geometry of $\Sigma$, and $C^{0}(\Sigma)$ bounds on jets of functions $J^{k_1}f_{1},\ldots,J^{k_n}f_{n}$. Note that both $u$ and $\psi$ could be defined globally or locally and it could depend on variables apart from $t$ and $x$. 
\begin{remark}
\label{Thomas}The integrals of the polynomials in (\ref{GG}) define
smooth and globally defined functions of $x.$ While the normal coordinate
chart involves a choice up to the action of the orthogonal group $\mathcal{O}(d,\R)$ (and therefore the  polynomials\ $W_{i}$ are not uniquely defined ), the integral in this case is invariant under these actions and is therefore well defined globally. \ 
\end{remark}

We immediately use Lemma \ref{BigNasty} to expand $\theta_{t}$,  recalling (\ref{generaldensity}). 

\begin{lemma}
\label{theta_expand}%
\[
\frac{\theta_{t}(x)}{(2\pi t)^{d/2}}=1+\Upsilon_{1}(x)t+\Upsilon_{2}%
(x)t^{2}+\Upsilon_{2+}(x,t).
\]
where
\begin{align}
\Upsilon_{1}(x) &  =\frac{1}{(2\pi)^{d/2}}\int_{%
\mathbb{R}
^{d}}e^{-\frac{\Vert\zeta\Vert_{d}^{2}}{2}}\left\{  p_{2}(d\mu,x)[\zeta
]+\frac{p_{4}(\phi,x)}{2}[\zeta]\right\}  d\zeta\nonumber\\
\Upsilon_{2}(x) &  =\frac{1}{(2\pi)^{d/2}}\int_{%
\mathbb{R}
^{d}}e^{-\frac{\Vert\zeta\Vert_{d}^{2}}{2}}\left\{
\begin{array}
[c]{c}%
p_{4}(d\mu,x)[\zeta]\\
+p_{4}(\phi,x)\otimes p_{2}(d\mu,x)[\zeta]
\end{array}
\right\}  d\zeta\nonumber\\
\left\vert \Upsilon_{2+}(x,t)\right\vert  &  \leq C_{\Sigma}t^{2}.
\label{Bon Scott}%
\end{align}
In particular, $\Upsilon_{1}(x)$ and $\Upsilon_{2}(x)$ are smooth and globally
defined. (See Remark \ref{Thomas}).
\end{lemma}

\begin{notation}
Since $\phi$ and $d\mu$ are assumed to be smooth, we will abbreviate
$C_{\Sigma}$ to denote a constant depending on $\Sigma$ and perhaps some higher
jets on $\phi$ and $d\mu$, as in (\ref{Bon Scott}). 
\end{notation}

\begin{proof}
Apply Lemma \ref{BigNasty},  with $w\equiv 1$ and therefore $\tilde{w}\equiv 1.$%
\begin{align*}
&  \frac{1}{(2\pi t)^{d/2}}\int_{\Sigma}e^{-\frac{\Vert x-y\Vert_{N}^{2}}{2t}%
}d\mu(y)\\
&  =\frac{1}{(2\pi t)^{d/2}}\int_{B_{\varepsilon}(x)}e^{-\frac{\Vert
x-\exp_{x}z\Vert_{N}^{2}}{2t}}d\mu(z)+\frac{1}{(2\pi t)^{d/2}}\int%
_{\Sigma\backslash B_{\varepsilon}(x)}e^{-\frac{\Vert x-y\Vert_{N}^{2}%
}{2t}}d\mu(y)\\
&  =\frac{1}{(2\pi)^{d/2}}\int_{%
\mathbb{R}
^{d}}e^{-\frac{\Vert\zeta\Vert_{d}^{2}}{2}}\left\{
\begin{array}
[c]{c}%
1\\
+\left(  p_{2}(d\mu,x)[\zeta]+\tilde{w}(x,0)\frac{p_{4}(\phi,x)}{2}%
[\zeta]\right)  t\\
+\left(  \tilde{w}(x,0)p_{4}(d\mu,x)[\zeta]+p_{4}(\phi,x)\tilde{w}%
(x,0)p_{2}(d\mu,x)[\zeta]\right)  t^{2}%
\end{array}
\right\}  d\zeta\\
&+\Phi(t^{2};\phi)+O(t^{5/2};\Sigma,\phi,J^{5}d\mu)+O(e^{-\varepsilon
^{2}/2t};\Sigma),
\end{align*}
Collecting and absorbing the terms completes the proof.

\end{proof}
Recall that we are
assuming the manifold is smooth, thus the distance function and\ $\phi$ are
smooth within the injectivity radius of the diagonal and the expression for
$d\mu$ is also smooth. 
\begin{corollary}
\label{Deep Purple} As above,
\[
\frac{(2\pi t)^{d/2}}{\theta_{t}(x)}=1-\Upsilon_{1}(x)t+\hat{\Upsilon}%
_{2}(t,x),
\]
where
\[
\left\vert \hat{\Upsilon}_{2}(t,x)\right\vert \leq C_{\Sigma}t^{2}.
\]

\end{corollary}

\begin{proof}
Let
\[
\mathscr{J}(x,t)=\frac{(2\pi t)^{d/2}}{\theta_{t}(x)}-\left(  1-\Upsilon_{1}(x)t\right)
\]
so that
\[
\frac{(2\pi t)^{d/2}}{\theta_{t}(x)}=1-\Upsilon_{1}(x)t+\mathscr{J}(x,t).
\]

Then
\begin{align*}
\frac{\theta_{t}(x)}{(2\pi t)^{d/2}}\times\frac{(2\pi t)^{d/2}}{\theta_{t}(x)}
&  =1\\
&  =\left(  1+\Upsilon_{1}(x)t+\Upsilon_{2}(t,x)+\Upsilon_{2+}(x,t)\right)
\left(  1-\Upsilon_{1}(x)t+\mathscr{J}(x,t)\right)  \\
&  =1+\mathscr{J}(x,t)\left(1+O(t;\Sigma)\right)+O(t^{2};\Sigma)
\end{align*}
which implies
\[
\mathscr{J}(x,t)=O(t^{2};\Sigma).
\]

\end{proof}

\bigskip

Now we can compute an expression for the Laplace operator.

\begin{lemma}
\label{Laplacef} \ Suppose $f\in C^{k+4}\left(  \Sigma\right)  $ for $k\geq1.$
Fix any $x_{0}\in\Sigma.$ Then
\[
L_{t}f=\Delta_{g}f(x)+F_{1}(x;f)t-\Upsilon_{1}(x)t+\hat{F}_{3/2}(x,t;f)
\]
where $F_{1}$ is a $C^{k}(\Sigma)$ locally determined function depending on
the geometry of $\ \Sigma$ and $J^{4}f;$ $\Upsilon_{1}$ is as in Corollary \ref{Deep Purple}; and
\begin{equation}
\left\vert \hat{F}_{3/2}(x,t;f)\right\vert \leq C(\Sigma,\|f\|_{C^{5}(\Sigma)})t^{3/2}
\label{F_3}%
\end{equation}

\end{lemma}

\begin{proof}
Recall%
\[
L_{t}f(x)=\frac{2}{t}\frac{(2\pi t)^{d/2}}{\theta_{t}(x)}\frac{1}{(2\pi
t)^{d/2}}\int_{\Sigma}e^{-\frac{\Vert x-y\Vert_{N}^{2}}{2}}\left(
f(y)-f(x)\right)  d\mu(y).
\]
Splitting the integral into an integral over a geodesic ball and an integral over the rest of
$\Sigma$
\begin{align*}
\frac{1}{(2\pi t)^{d/2}}\int_{\Sigma}&e^{-\frac{\Vert x-y\Vert_{N}^{2}}{2t}%
}\left(  f(y)-f(x)\right)  d\mu(y)=\\
&\frac{1}{(2\pi t)^{d/2}}\int%
_{B_{\varepsilon}^{d}(0)}e^{-\frac{\Vert x-\exp_{x}z\Vert_{N}^{2}}{2t}}\left(
\tilde{f}(x,z)-\tilde{f}(x,0)\right)  d\mu(z)\\
&  +\frac{1}{(2\pi t)^{d/2}}\int_{\Sigma\backslash B_{\varepsilon}^{d}%
(0)}e^{-\frac{\Vert x-y\Vert_{N}^{2}}{2t}}\left(  f(y)-f(x)\right)  d\mu(y)
\end{align*}
we can use Lemma \ref{BigNasty} by setting $\tilde{w}(x,z)=\tilde{f}(x,z)-\tilde{f}(x,0)$, and noting that $\tilde{w}(x,0)=0$ we have
\begin{align}
&  \frac{1}{(2\pi t)^{d/2}}\int_{B_{\varepsilon}^{d}(0)}e^{-\frac{\Vert
x-\exp_{x}z\Vert_{N}^{2}}{2t}}\left(  \tilde{f}(x,z)-\tilde{f}(x,0)\right)
d\mu(z)\nonumber\\
&  =\frac{1}{(2\pi)^{d/2}}\int_{%
\mathbb{R}
^{d}}e^{-\frac{\Vert\zeta\Vert_{d}^{2}}{2}}\left\{
\begin{array}
[c]{c}%
D_{z}^{2}\tilde{f}(x,0)[\zeta]t\\
+\left(
\begin{array}
[c]{c}%
D_{z}^{4}\tilde{f}(x,0)[\zeta]+D_{z}^{2}\tilde{f}(x,0)\otimes p_{2}%
(d\mu,x)[\zeta]\\
+D_{z}^{1}\tilde{f}(x,0)\otimes p_{3}(d\mu,x)[\zeta]
\end{array}
\right)  t^{2}%
\end{array}
\right\}  d\zeta\label{Purple Haze}\\
+ &  O(t^{5/2};J^{5}f).\nonumber
\end{align}
\ We collect the second order terms in $t$ into $F_{1}$ and absorb the higher order into
$F_{3/2}$ to obtain the following expression.
\begin{align*}
\frac{1}{(2\pi t)^{d/2}}\int_{B_{\varepsilon}^{d}(0)}e^{-\frac{\Vert
x-y\Vert_{N}^{2}}{2t}}\left(  f(x)-f(y)\right)  d\mu(y) &  =\frac{t}%
{(2\pi)^{d/2}}\int_{%
\mathbb{R}
^{d}}e^{-\frac{\Vert\zeta\Vert_{d}^{2}}{2}}D_{z}^{2}\tilde{f}(x,0)[\zeta
]d\zeta \\
&  +F_{1}(x)t^{2}+F_{3/2}(x,t).
\end{align*}
For $\left\vert F_{3/2}(x,t)\right\vert \leq Ct^{5/2}.$ \ Recalling notation
\ref{polynote} and using Lemma \ref{basic_monomials}
\begin{equation}
\frac{1}{(2\pi)^{d/2}}\int_{%
\mathbb{R}
^{d}}e^{-\frac{\Vert\zeta\Vert_{d}^{2}}{2}}D_{z}^{2}f(x,0)[\zeta]d\zeta=\frac{1}%
{2}\Delta_{g}f(x)\label{la verdad}%
\end{equation}

Now using Corollary \ref{Deep Purple}, we combine%
\begin{align}
&  \frac{2}{t}\frac{(2\pi t)^{d/2}}{\theta_{t}(x)}\frac{1}{(2\pi t)^{d/2}}%
\int_{\Sigma}e^{-\frac{\Vert x-y\Vert_{N}^{2}}{2}}\left(  f(y)-f(x)\right)
d\mu(y)\label{willrepeat}\\
&  =\frac{2}{t}\left(  1-\Upsilon_{1}(x)t+\hat{\Upsilon}_{2}(t,x)\right)
\left\{
\begin{array}
[c]{c}%
\frac{1}{2}\Delta_{g}f(x)t+F_{1}(x)t^{2}+F_{3/2}(x,t)\\
+\frac{1}{(2\pi t)^{d/2}}\int_{\Sigma\backslash B_{\varepsilon}%
(x)}e^{-\frac{\Vert x-y\Vert_{N}^{2}}{2t}}\left(  f(x)-f(y)\right)  d\mu(y)
\end{array}
\right\}  \nonumber\\
&  =\Delta_{g}f(x)+2F_{1}(x)t-2\Upsilon_{1}(x)t+\hat{F}_{3/2}(x,t)\nonumber
\end{align}
where we have consolidated all terms of order higher than $3/2.$
\end{proof}

In what follows we will make use of the following notation
\begin{align}
    \|f\|_{\mathrm{Lip}}=\sup_{x,y\in \Sigma, ~ x\ne y}\left\{\frac{|f(x)-f(y)|}{\|x-y\|_{N}}\right\}\label{lip}.
\end{align}
Note that \eqref{lip} represents a a Lipschitz semi-norm with respect to the ambient metric. We now follow suit for $\Gamma:$

\begin{lemma}
\label{cdc1}Assume that $f,h$ are $C^{k+2}$ for $k\leq1.$%
\begin{equation}
\Gamma_{t}(f,h)=\nabla f\cdot\nabla h+H_{1}(x;f,h)t-\Upsilon_{1}(x)t+\hat
{H}_{3/2}(x,t;f,h)\label{Pink Floyd}%
\end{equation}
where $H_{1}\in C^{k-1}(\Sigma)$, in fact $H_{1}$ is a quadratic expression in the jets $J^{3}f(x)$ and $J^{3}h(x)$ at the point $x$; $\Upsilon_{1}$ is as in Lemma \ref{theta_expand} ; and
\[
\left\vert \hat{H}_{3/2}(x,t;f,h)\right\vert \leq C(\Sigma,\|%
f\|_{C^{4}(\Sigma)},\|h\|_{C^{4}(\Sigma)})\cdot t^{3/2}.
\]
Alternatively, assuming that $f,h$ are merely $C^{1}$, we have
\begin{equation}
\left\vert \Gamma_{t}(f,h)\right\vert (x)\leq2\left(  1-\Upsilon_{1}%
(x)t+\hat{\Upsilon}_{2}(t,x)\right)  C\left\Vert f\right\Vert _{\mathrm{Lip}}\left\Vert
h\right\Vert _{\mathrm{Lip}}\label{Cherry Thunder Fuck}%
\end{equation}
for some $C$ depending only on the global geometry of $\Sigma.$
\end{lemma}

\begin{proof}
Recall%
\[
\Gamma_{t}(f,h)(x)=\frac{1}{2}\frac{1}{t}\frac{(2\pi t)^{d/2}}{\theta_{t}%
(x)}\frac{1}{(2\pi t)^{d/2}}\int_{\Sigma}e^{-\frac{\Vert x-y\Vert_{N}^{2}}%
{2t}}\left(  f(x)-f(y)\right)  \left(  h(x)-h(y)\right)  d\mu(y).
\]

Letting
\[
w(x,y)=\left(  f(x)-f(y)\right)  \left(  h(x)-h(y)\right)
\]
we obtain%
\begin{align}
\frac{1}{(2\pi t)^{d/2}}\int_{\Sigma}e^{-\frac{\Vert x-y\Vert_{N}^{2}}{2t}%
}w(x,y)d\mu(y) &  =\frac{1}{(2\pi t)^{d/2}}\int_{B_{\varepsilon}%
^{d}(0)}e^{-\frac{\Vert x-\exp_{x}z\Vert_{N}^{2}}{2t}}\tilde{w}(x,z)d\mu
(z)\label{Ghost Bubba}\\
&  +\frac{1}{(2\pi t)^{d/2}}\int_{\Sigma\backslash B_{\varepsilon}^{d}%
(0)}e^{-\frac{\Vert x-y\Vert_{N}^{2}}{2t}}w(x,y)d\mu(y)\nonumber
\end{align}
where $\tilde{w}(x,z)=w(x,\exp_{x}(z))$ for $y$ in $B_{\epsilon}(x)$. We can now apply Lemma \ref{BigNasty}, noting that $\tilde{w}(x,0)=0$ and
$D_{z}\tilde{w}(x,0)=0$ \ (thus the $4$-jet of $w$ depends only on the $2$-jet
of $f$ or $h.$)$\ $
\begin{align*}
&  \frac{1}{(2\pi t)^{d/2}}\int_{B_{\varepsilon}^{d}(0)}e^{-\frac{\Vert
x-\exp_{x}z\Vert_{N}^{2}}{2t}}\tilde{w}(x,0)d\mu(z)\\
&  =\frac{1}{(2\pi)^{d/2}}\int_{%
\mathbb{R}
^{d}}e^{-\frac{\Vert\zeta\Vert_{d}^{2}}{2}}\left\{
\begin{array}
[c]{c}%
D_{z}^{2}\tilde{w}(x,0)[\zeta]t\\
+\left(  D_{z}^{4}\tilde{w}(x,0)[\zeta]+D_{z}^{2}\tilde{w}(x,0)\otimes
p_{2}(d\mu,x)[\zeta]\right)  t^{2}%
\end{array}
\right\}  d\zeta\\
&  +O(t^{5/2};\Sigma,J^5w,J^5\phi,J^{5}d\mu).
\end{align*}
Noting that
\[
D^{2}_{z}\tilde{w}(x,0)=2D_{z}\tilde{f}(x,0)\otimes D_{z}\tilde{h}(x,0)
\]
and absorbing the $O(t^{5/2})$ terms as we have been doing, we obtain
\[
\frac{1}{(2\pi t)^{d/2}}\int_{B_{\varepsilon}^{d}(0)}e^{-\frac{\Vert
x-y\Vert_{N}^{2}}{2t}}\tilde{w}(x,y)d\mu(y)=\left(  \nabla f\cdot\nabla
h\right)  (x)t+H_{1}(x)t^{2}+H_{3/2}(x,t),
\]
with $\left\vert H_{3/2}(x,t)\right\vert \leq Ct^{5/2}.$ Observe that $H_{1}(x)$ is in fact a quadratic expression in the components of the jets $J^{3}f(x)$ and $J^{3}h(x)$ at the point $x$. To see this, note that 
\begin{align*}
    H_{1}(x;f,h)=\frac{1}{(2\pi)^{d/2}}\int_{\R^{d}}e^{-\frac{\Vert\zeta\Vert_{d}^{2}}{2}}\left(  D_{z}^{4}\tilde{w}(x,0)[\zeta]+D_{z}^{2}\tilde{w}(x,0)\otimes
p_{2}(d\mu,x)[\zeta]\right)d\zeta
\end{align*}
which implies that in principle, $H_1$ is a quadratic expression in $J^{4}f(x)$ and $J^{4}h(x)$, however upon examining the term $D_{z}^{4}\tilde{w}(x,0)[\zeta]$ one sees that 
\begin{align*}
 D_{z}^{4}\tilde{w}(x,0)[\zeta]=\sum_{\alpha+\beta=4}\left(D_{z}^{\alpha}(\tilde{f}(x,z)-\tilde{f}(x,0))[\zeta]\otimes D^{\beta}_{z}(\tilde{h}(x,z)-\tilde{h}(x,0))[\zeta]\right)\bigg\rvert_{z=0}   
\end{align*}
which implies that the only two terms that contain derivatives of order four of $\tilde{f}(x,z)$ and $\tilde{h}(x,z)$ are 
\begin{align*}
D^{4}_z(\tilde{f}(x,z)-\tilde{f}(x,0))[z]\otimes(\tilde{h}(x,z)-\tilde{h}(x,0))[z]\bigg\rvert_{z=0} = 0,
\end{align*}
and
\begin{align*}
(\tilde{f}(x,z)-\tilde{f}(x,0))[z]\otimes D^{4}_{z}(\tilde{h}(x,z)-\tilde{h}(x,0))[z]\bigg\rvert_{z=0} = 0.
\end{align*}
It follows that we may represent $H_{1}$ schematically as 
\begin{align}\label{quadraticH1}
    H_{1}(x;f,h) = J^{3}f(x)*J^{3}h(x),
\end{align}
i.e., an expression that is quadratic in the derivatives of order at most 3 of $f(x)$ and $h(x)$. Estimate (\ref{Pink Floyd}) follows now as for Laplacian, see
(\ref{willrepeat}).  Alternatively, we may go back to (\ref{Ghost Bubba}) and instead apply a
Lipschitz bound
\[
\left\vert \tilde{w}(x,y)\right\vert =\left\vert \left(  f(x)-f(y)\right)
\left(  h(x)-h(y)\right)  \right\vert \leq C\left\Vert f\right\Vert
_{\mathrm{Lip}}\left\Vert h\right\Vert _{\mathrm{Lip}}\left\Vert x-y\right\Vert_{N} ^{2}.
\]
We therefore have the estimate
\begin{align}
\left\vert \frac{1}{(2\pi t)^{d/2}}\int_{\Sigma}e^{-\frac{\Vert x-y\Vert
_{N}^{2}}{2t}}\tilde{w}(x,y)d\mu(y)\right \vert &\leq\frac{C}{(2\pi
t)^{d/2}}\int_{\Sigma}e^{-\frac{\Vert x-y\Vert_{N}^{2}}{2t}}\left\Vert
f\right\Vert _{\mathrm{Lip}}\left\Vert h\right\Vert _{\mathrm{Lip}}\left\Vert x-y\right\Vert_{N} ^{2}d\mu(y)\label{Wonder Kid}
\end{align}
Note that for $y$ in $B_{\epsilon}(x)$ we may write $y=\exp_{x}(z)$ and use \eqref{define_phi} to conclude that
\begin{align}
 \frac{1}{(2\pi
t)^{d/2}}\int_{\Sigma}e^{-\frac{\Vert x-y\Vert_{N}^{2}}{2t}}\left\Vert x-y\right\Vert_{N} ^{2}d\mu(y)=t+O(t^2)\label{intDistsquare}   
\end{align}
and combining \eqref{Wonder Kid} with \eqref{intDistsquare} we have 
\begin{align}
\left\vert \frac{1}{(2\pi t)^{d/2}}\int_{\Sigma}e^{-\frac{\Vert x-y\Vert
_{N}^{2}}{2t}}\tilde{w}(x,y)d\mu(y)\right \vert\le 2t\|f\|_{\mathrm{Lip}}\|h\|_{\mathrm{Lip}}.    
\end{align}

Multiplying by
\begin{equation}
\frac{1}{2}\frac{1}{t}\frac{(2\pi t)^{d/2}}{\theta_{t}(x)}=\frac{2}{t}\left(
1-\Upsilon_{1}(x)t+\hat{\Upsilon}_{2}(t,x)\right)  \label{Wunder Kind}%
\end{equation}
gives
\begin{align*}
\left\vert \frac{1}{(2\pi t)^{d/2}}\int_{\Sigma}e^{-\frac{\Vert x-y\Vert
_{N}^{2}}{2t}}\tilde{w}(x,y)d\mu(y)\right\vert \leq 4C\left(  1-\Upsilon
_{1}(x)t+\hat{\Upsilon}_{2}(t,x)\right)  \left\Vert f\right\Vert
_{\mathrm{Lip}}\left\Vert h\right\Vert _{\mathrm{Lip}}.
\end{align*}

\end{proof}

\bigskip We may now iterate, but first, there is a small technical difficulty that we need to address: \ We have expressed our original expansion under the assumption that
$w\in C^{5}(\Sigma)$, and thus formulas for $L_{t}$ and $\Gamma_{t}$ to apply whenever
$f$ is five times differentiable,  and in fact the remainder terms are bounded explicitly 
by expressions in $J^{5}f.$ Applying a differential operator to a finitely differentiable
function lowers the differentiability order of the function, so these bounds are 
no longer available when we try
to iterate. One option is to make an effort to carefully restate the
dependence on differentiability (essentially by either repeating the proof of the
expansion Lemma \ref{BigNasty}, or, by proving a very general version of Lemma
\ref{BigNasty} \ which we only use in two cases). Instead, our approach
is to use a local approximation, as is suggested by the following Lemma.  

\begin{lemma}
\label{smooth_local}Suppose that $f$ is $C^{3}$ function on $\Sigma.$ Then for
a fixed $x_{0},$ we may write
\[
f(x)=\alpha(x)+\beta(x)
\]
where $\alpha$ is a smooth function osculating $f$ to second order at $x_{0}$
that is
\begin{align}
J^{2}\alpha(x_{0}) &  =J^{2}f(x_{0})\label{jetsagree}\\
\left\vert \beta(x)\right\vert  &  \leq C(\Sigma,\left\Vert f\right\Vert _{C^{3}(\Sigma)
})\left\Vert x-x_{0}\right\Vert ^{3}_{N}.\nonumber
\end{align}
where \eqref{jetsagree} holds in the normal coordinate chart defined in $B_{\epsilon}(x_{0})$. The constant $C(\Sigma,\left\Vert f\right\Vert _{C^{3}(\Sigma)})$ can be chosen
independent of $x_{0}.$
\end{lemma}

\begin{proof}
Choose normal coordinates at $x_{0}$ of radius $\varepsilon.$ \ \ Let
$\eta(x)$ be a smooth bump function which is unity on a ball of radius
$\varepsilon/2$ and vanishing inside the ball. \ \ Let $f_{2}$ be the second
order Taylor polynomial at the origin. \ Then on $B_{\varepsilon}(0)$ we have
\[
\left\vert f(z)-f_{2}(z)\right\vert \leq C_{3}\cdot\left\Vert z\right\Vert _{d}^{3}.
\]
Now define
\begin{align*}
\alpha(z) &  =\eta(z)f_{2}(z)\\
\beta(x) &  =\left(  f(x)-f_{2}(x)\right)  \eta(x)+(1-\eta)f(x).
\end{align*}
Clearly inside $B_{\varepsilon/2}$ we have
\begin{equation}
\left\vert \beta(x)\right\vert =\left\vert \left(  f(x)-f_{2}(x)\right)
\eta(x)\right\vert \leq C_{3}\cdot\left\Vert z\right\Vert _{d}^{3}\leq C(\left\Vert
f\right\Vert _{C^{3}(\Sigma)},\Sigma)\left\Vert x-x_{0}\right\Vert ^{3}_{N}%
\label{beta_bound}%
\end{equation}
for a $C$ that depends on $\Sigma$, including a small metric distortion
between ambient and geodesic distance occurring on small balls (see the discussion surrounding \eqref{define_phi}).  The inequality holds on the outside of $B_{\varepsilon
/2}$ as $\left\Vert x-x_{0}\right\Vert ^{3}_{N}$ is bounded below on
$\Sigma\backslash B_{\varepsilon/2}$ by virtue of line (\ref{useful}). 
\end{proof}

\begin{lemma}
\label{l319} Suppose $f$ $\in$ $C^{5}(\Sigma).$ \ Then%
\[
L_{t}(\Gamma_{t}(f,f))(x)=\frac{1}{2}\Delta_{g}\left\vert \nabla f\right\vert
^{2}(x)+A_{1/2}(x,t;f)
\]
for 
\[
\left|A_{1/2}(x,t;f)\right|\leq C(\Sigma,\|f\|_{C^{5}(\Sigma)})t^{1/2}.
\]

\end{lemma}

\begin{proof}
Recall that for $f\in C^{5}(\Sigma)$ we have by Lemma \ref{cdc1}%
\[
\Gamma_{t}(f,f)(x)=(\nabla f\cdot\nabla f)(x)+H_{1}(x;f)t-\Upsilon_{1}%
(x)t+\hat{H}_{3/2}(x,t;f)
\]
with
\[
\left\vert \hat{H}_{3/2}(x,t;f)\right\vert \leq C(\Sigma,\|f\|_{C^{4}(\Sigma)})\cdot t^{3/2}%
\]
where $\Upsilon_{1}(x)$ is smooth, $\left\vert \nabla f\right\vert ^{2}(x)$ is
$C^{4}(\Sigma)$, and $H_{1}(x)$ is $C^{3}(\Sigma).$ \ In particular at any given point
we let%
\begin{align*}
S(x) &  :=\nabla f\cdot\nabla f(x)+H_{1}(x;f)t-\Upsilon_{1}(x)t\\
&  =\alpha(x)+\beta(x)-\Upsilon_1(x)\cdot t+H_{1}(x;f)\cdot t,
\end{align*}
where $\alpha(x)$ is $C^{5}(\Sigma)$ (smooth, if we want) and 
\begin{align*}
\left\vert \beta(x)\right\vert \leq C\left\Vert \langle\nabla f,\nabla f\rangle_{g}(x)\right\Vert _{C^{3}(\Sigma)}%
\left\Vert x-x_{0}\right\Vert ^{3}_{N}.
\end{align*}

Let us start by computing
\begin{align*}
L_{t}\Gamma_{t}(f,f)(x)&=L_{t}\left(  \alpha_{0}(x)+\beta_{0}(x)+\hat{H}_{3/2}
(x,t;f)\right)\\
&+L_{t}\left(-\Upsilon_1(x)\cdot t+H_{1}(x;f)\cdot t\right)
\end{align*}
as follows. Applying Lemma \ref{Laplacef} and (\ref{jetsagree}) we have
\begin{align*}
L_{t}\left(  \alpha\right)  (x_{0}) &  =\Delta_{g}\alpha(x_{0})+F_{1}%
(x_{0};\alpha)t-\Upsilon_{1}(x_{0})t+\hat{F}_{3/2}(x_{0},t;\alpha)\\
&  =\Delta_{g}\left\vert \nabla f\right\vert ^{2}(x_{0})+\Delta_{g}\left[
H_{1}(x_{0};f)-\Upsilon_{1}(x_{0})\right]  t\\
&  +F_{1}(x_{0};\alpha)t-\Upsilon_{1}(x_{0})t+\hat{F}_{3/2}(x_{0},t;\alpha).
\end{align*}
Next using (\ref{beta_bound}) and rescaling 
\begin{align}
\left\vert L_{t}\beta(x_{0})\right\vert  &  =\left\vert \frac{1}{t}\frac{(2\pi
t)^{d/2}}{\theta_{t}(x)}\frac{1}{(2\pi t)^{d/2}}\int_{\Sigma}e^{-\frac{\Vert
y-x_{0}\Vert_{N}^{2}}{2t}}\left(  \beta(y)-\beta(x_{0})\right)  d\mu
(y)\right\vert\label{betaline1} \\
&  \leq\frac{6\sqrt{6}2^{\frac{d+4}{4}}\cdot e^{-3/2}}{t}\left\vert \frac{(2\pi t)^{d/2}}{\theta_{t}(x)}\frac
{1}{(2\pi)^{d/2}}\int_{\Sigma}C_{3}e^{-\frac{\Vert y-x_{0}\Vert_{N}^{2}}{2}}%
t^{3/2}d\mu(y)\right\vert\label{betaline2} \\
&  \leq C_3t^{1/2}\left|1-\Upsilon_{1}(x)t+\Upsilon_{2}(x,t)\right|.\label{betaline3}
\end{align}
In order to bound line \eqref{betaline1} by line \eqref{betaline2} we have used the inequality
\begin{align*}
 \left|e^{-\frac{\Vert
y-x_{0}\Vert_{N}^{2}}{2t}}\left(  \beta(y)-\beta(x_{0})\right) \right|&\le C_{3}e^{-\frac{\Vert
y-x_{0}\Vert_{N}^{2}}{2t}}\|y-x_{0}\|^{3}_{N}\\
&=C_{3}e^{-\frac{\Vert y-x_{0}\Vert_{N}^{2}}{4t}}e^{-\frac{\Vert y-x_{0}\Vert_{N}^{2}}{4t}}\|y-x_{0}\|^{3}_{N}\\
&\le C_36\sqrt{6}e^{-3/2}t^{3/2}e^{-\frac{\Vert y-x_{0}\Vert_{N}^{2}}{4t}},
\end{align*}
and for $t$ sufficiently small depending only on $\varepsilon$ we have
\begin{align*}
    \frac{\displaystyle{\int_{\Sigma}e^{-\frac{\|y-x_{0}\|^2}{4t}}d\mu(y)}}{\theta_{t}(x)}\le 2^{\frac{d+2}{2}}.
\end{align*}
Note that we have absorbed all multiplicative constants in line \eqref{betaline2} into $C_{3}$ in \eqref{betaline3}. We are now able to compute $L_{t}\hat{H}_{3/2}(x,t)$. We start by estimating $L_{t}H_{1}(x)$ by first writing
\begin{align}
   \frac{1}{(2\pi t)^{d/2}}\int_{\Sigma}e^{-\frac{\|x-y\|^2_{N}}{2t}}(H_{1}(y)&-H_{1}(x))d\mu(y)\nonumber\\
   &=\frac{1}{(2\pi t)^{d/2}}\int_{B_{\epsilon}(x)}e^{-\frac{\|x-y\|^2_{N}}{2t}}(H_{1}(y)-H_{1}(x))d\mu(y)\nonumber\\
   &+\frac{1}{(2\pi t)^{d/2}}\int_{\Sigma\setminus B_{\epsilon}(x)}e^{-\frac{\|x-y\|^2_{N}}{2t}}(H_{1}(y)-H_{1}(x))d\mu(y)\label{intH1}
\end{align}
and for the integral over $B_{\epsilon}(x)$ in \eqref{intH1} we estimate $H_{1}(y)-H_{1}(x)$ by first writing $y=\exp_{x}(z)$ and also
\begin{align}
H_{1}(y)-H_{1}(x)&=\int_{0}^{1}\frac{d}{dt}H_{1}(\exp_{x}(tz))dt\nonumber\\
&=\int_{0}^{1}D^{1}_{z}\tilde{H}_{1}(x,tz)\frac{d}{dt}\exp_{x}(tz)dt.\label{H1diff}
\end{align}
We therefore have
\begin{align}
    \left|H_{1}(y)-H_{1}(x)\right|&\le \sqrt{\left(\int_{0}^1\left\|D_{z}\tilde{H}_{1}(x,tz)\right\|^{2}_{\dot{\gamma}(t)}dt\right)}d_{\Sigma}(x,y)\label{geodesic_estimate}\\
    &=C(\Sigma,\|f\|_{C^{4}(\Sigma)},\|h\|_{C^{4}(\Sigma)})\cdot\|z\|_{d},\nonumber
\end{align}
where $\dot{\gamma}(y)=\frac{d}{dt}\exp_{x}(tz)$. Note that the inequality
\begin{align*}
\sqrt{\left(\int_{0}^1\left\|D_{z}\tilde{H}_{1}(x,tz)\right\|^{2}_{\dot{\gamma}(t)}dt\right)}\le C(\Sigma,\|f\|_{C^{4}(\Sigma)},\|h\|_{C^{4}(\Sigma)})    
\end{align*}
follows from the observation that $H_{1}(x)$ is given by a quadratic polynomial on the components of the jets $J^{3}f(x)$ and $J^{3}h(x)$ at the point $x$ (see the discussion in the proof of Lemma \ref{cdc1} and in particular equation \eqref{quadraticH1}). By Lemma \ref{BigNasty} we have 
\begin{align*}
\left| \frac{1}{(2\pi t)^{d/2}}\int_{\Sigma}e^{-\frac{\|x-y\|^2_{N}}{2t}}(H_{1}(y)-H_{1}(x))d\mu(y)\right|= O(t^{1/2};J^{4}f(x),J^{4}h(x)).
\end{align*}
Finally, in order to estimate $\Gamma_{t}\left(-\Upsilon_{1}(x)\cdot t\right)$ we write
\begin{align*}
L_{t}\left(-\Upsilon_{1}(x)\cdot t\right)=\frac{1}{\theta_{t}(x)}\int_{\Sigma}e^{-\frac{\|x-y\|^2_{N}}{2t}}(\Upsilon_1(x)-\Upsilon_1(y))d\mu(y)    
\end{align*}
and an argument similar to the one used to prove inequality \eqref{H1diff} shows that 
\begin{align*}
\left|L_{t}\left(-\Upsilon_{1}(x)\cdot t\right)\right|\le C_{\Sigma}\cdot t^{1/2}.
\end{align*}
\end{proof}

\begin{lemma}\label{FDR}
Suppose $f$ is $C^{5}\left(  \Sigma\right)  .$ \ Then%
\[
\Gamma_{t}(L_{t}f,f))(x)=\nabla\Delta_{g}f\cdot\nabla f+A_{1}^{\prime}(x,t;f)
\]
for
\[
\left|A_{1}^{\prime}(x,t;f)\right|\leq C(\Sigma,\|f\|_{C^{5}(\Sigma)})t.
\]
\end{lemma}

\begin{proof}
By Lemma \ref{Laplacef} we can write locally near $x_{0}$%
\[
L_{t}f(x)=\Delta_{g}f(x)+F_{1}(x;f)t-\Upsilon_{1}(x)t+\hat{F}_{3/2}(x,t;f).
\]
Noting that $\Delta_{g}f(x)-\Upsilon_{1}(x)t$ is $C^{3}(\Sigma)$ and $F_{1}(x)t$ is
$C^{1}(\Sigma).$ \ Using bilinearity,  we may compute
\begin{align}
\Gamma_{t}(L_{t}f,f)(x) &  =\Gamma_{t}\left(  \Delta_{g}f-\Upsilon
_{1}\cdot t,f\right)  (x)\label{Blue Dream}\\
&  +\Gamma_{t}\left(F_{1}(x;f)\cdot t,f\right)(x)\label{Northern Lights}\\
&  +\Gamma_{t}(\hat{F}_{3/2}(x,t;f),f))(x).\label{Krishna Kush}%
\end{align}
To begin, we may use Lemma \ref{smooth_local} to obtain \eqref{Blue Dream} in the following way: without loss of generality we may write

\[
\Delta_{g}f(y)-\Upsilon_{1}(y)\cdot t=\alpha(t,y)+\beta(t,y)
\]
with
\[
J^{2}\left(\alpha(t,x)\right)\bigg\rvert_{x=x_0}=J^{2}\Delta_{g}f(x_0)-tJ^{2}\Upsilon_{1}(x_0)
\]
as in (\ref{jetsagree}) and 
\begin{align*}
    \left |\beta(t,x)\right|\le C(\Sigma,\|f\|_{C^{5}(\Sigma)})\|x-x_0\|^{3}_{N}.
\end{align*}

\ Now we compute (\ref{Blue Dream}), again using
bilinearity
\[
\Gamma_{t}(\Delta_{g}f-\Upsilon_{1}(t,f))(x)=\Gamma_{t}(\alpha,f)(x)+\Gamma
_{t}(\beta,f)(x).
\]
The first term involves $C^{5}$ terms so we are able to apply Lemma \ref{cdc1}%
\begin{align*}
\Gamma_{t}(\alpha,f)\left(  x_{0}\right)   &  =\nabla\alpha\cdot\nabla
f(x_{0})+H_{1}(x_{0};\alpha,f)t-\Upsilon_{1}(x_{0})t+\hat{H}_{3/2}%
(x_0,t;\alpha,f)\\
&  =\nabla\Delta_{g}f\cdot\nabla f\left(  x_{0}\right)  -t\nabla\Upsilon
_{1}\cdot\nabla f(x_{0})\\
&  +H_{1}(x_{0};\alpha,f)t-\Upsilon_{1}(x_{0})t+\hat{H}_{3/2}(x_{0}%
,t;\alpha,f)\\
&  =\nabla\Delta_{g}f\cdot\nabla f(x_{0})+O(t;J^{1}f).
\end{align*}
We now estimate $\Gamma_t(\beta,f)$:
\begin{align*}
 \Gamma_t(\beta,f)=\frac{1}{\theta_{t}(x_0)}\int_{\Sigma}(\beta(t,y)-\beta(t,x_0))(f(y)-f(x_0))e^{-\frac{\|y-x_0\|^{2}_{N}}{2t}}d\mu(y)   
\end{align*}
and since $|\beta(t,y)-\beta(t,x_0)|\le C(\Sigma,\|f\|_{C^{5}(\Sigma)})\|x-y\|^{3}_{N}$ it follows that
\begin{align*}
  \left|\Gamma_{t}(\beta,f)(x)\right|\le C(\Sigma,\|f\|_{C^{5}(\Sigma)})\cdot t.
\end{align*}
Next we deal with (\ref{Northern Lights}): \ Recall (\ref{Cherry Thunder Fuck}%
)%
\begin{align}
\left\vert \Gamma_{t}(F_{1}(x)t,f)\right\vert  &  \leq2\left(  1-\Upsilon
_{1}(x)t+\hat{\Upsilon}_{2}(t,x)\right)  \cdot C\cdot t\cdot\left\Vert F_{1}(x)\right\Vert
_{\mathrm{Lip}}\left\Vert f\right\Vert _{\mathrm{Lip}}\\
&  \leq t\cdot C\cdot\Vert f\Vert _{C^{5}(\Sigma)}\cdot\left\Vert f\right\Vert
_{C^{1}(\Sigma)}%
\end{align}
recalling that definition of $F_{1}(x)$ \  (\ref{Purple Haze}) puts $F_{1}(x)$
in $C^{1}\left(  \Sigma\right).$ Finally, by similar reasoning as before (see computations near
(\ref{Wonder Kid}) and (\ref{Wunder Kind})) we can bound the final term
(\ref{Krishna Kush})) via
\begin{align*}
&  \left\vert \Gamma_{t}(\hat{F}_{3/2}(x,t;f),f))(x)\right\vert \\
&  \leq\frac{1}{t}\frac{(2\pi t)^{d/2}}{\theta_{t}(x)}\frac{1}{(2\pi t)^{d/2}%
}\int_{\Sigma}e^{-\frac{\Vert y-x_{0}\Vert_{N}^{2}}{2t}}\cdot C(\Sigma
,\|f\|_{C^{5}(\Sigma)})\cdot t^{3/2}\cdot\left\Vert f\right\Vert _{\mathrm{Lip}}\left\Vert y-x_{0}\right\Vert_{N}
d\mu(y)\\
&  \le C^{\prime}(\Sigma,\|f\|_{C^{5}(\Sigma)})\cdot t.
\end{align*}

\end{proof}

\begin{proof}[Proof of Proposition \ref{tconvergence}] The proof of \eqref{densityt} follows from Lemma \ref{theta_expand}, \eqref{Ltexpand} follows from Lemma \ref{Laplacef} and \eqref{cdct} is a consequence of Lemma \ref{Pink Floyd}. On the other hand, \eqref{gamma2approx} follows from combining Lemmas \ref{l319} and \ref{FDR}.
\end{proof}

The argument that has led to the proof of Proposition \eqref{tconvergence} has a fundamental takeaway, namely that at all stages of the approximation we may determine the rate of convergence to the target object (e.g. Carr\'{e} du Champ) by expanding the test functions in question and separating the resulting expansion into homogeneous terms and terms that contain remainders. In other words, the factors that appear in the different integrands in this article and that are independent of the test functions, i.e. the Gaussian kernel and the volume form, admit expansions that once combined with the expansions of the test functions can only help the rate of convergence to the target object. This observation leads to a practical formulation of our argument that will be illustrated with the following corollary.  
 
\begin{corollary}\label{Cor4}
Let $\Sigma^{d}$ be a close embedded submanifold of $\R^{N}$ and let $g$ be the induced metric. Let $f$ be a function of class  $C^{4}(\Sigma)$. We have the following estimates
\begin{align}
    L_{t}f(x)&=\Delta_{g}f(x)+O(t;\Sigma,J^{4}f(x))\label{Lap4}\\
    \Gamma_{t}(f,h)(x) &= \langle\nabla f,\nabla h\rangle_{g}(x)+O(t;\Sigma,J^{3}f(x),J^{3}h(x))\label{CDC4}\\
    L_{t}(\Gamma_t(f,h))(x) &= \Delta_{g}\langle\nabla f,\nabla h\rangle_{g}(x)+O(t^{1/2};\Sigma,J^{4}f(x),J^{4}h(x))\label{LCDC4}\\
    \Gamma_{t}(L_t f,h)(x) &=\langle\Delta_{g}\nabla f,h\rangle_{g}(x)+O(t^{1/2};\Sigma,J^{4}f(x),J^{2}h(x)).\label{GammaL4}
\end{align}
\end{corollary}
\begin{proof}
The proof of Corollary \ref{Cor4} follows from combining a schematic formulation of the bounds \eqref{cdct}-\eqref{gamma2approx} when one expands the test functions $f(x)$ and $h(x)$ only up to fourth order, in the proof of Lemma \ref{BigNasty}. Note that this approach still relies on the estimate \eqref{order8} and expansion \eqref{l98}. More precisely, for \eqref{Lap4} we let again $\tilde{f}(x,z)=f(\exp_{x}(z))$  and consider the expansion 
\begin{align*}
    \tilde{f}(x,z) =\tilde{f}(x,0)+D_{z}^{1}\tilde{f}(x,0)[z]+D_{z}^{2}\tilde{f}(x,0)[z]+D_{z}^{3}\tilde{f}(x,0)[z]+\rho_{4}(\tilde{f},x)[z]
\end{align*}
then from Lemma \ref{BigNasty} 
\begin{align*}
    L_{t}f(x)=\Delta_{g}f(x)+O(t;\Sigma,J^{4}f(x)).
\end{align*}
For \eqref{CDC4} expand both $f$ and $h$ to fourth order
\begin{align*}
\tilde{f}(x,z) &=\tilde{f}(x,0)+D_{z}^{1}\tilde{f}(x,0)[z]+D_{z}^{2}\tilde{f}(x,0)[z]+D_{z}^{3}\tilde{f}(x,0)[z]+\rho_{4}(\tilde{f},x)[z]\\
\tilde{h}(x,z) &=\tilde{h}(x,0)+D_{z}^{1}\tilde{h}(x,0)[z]+D_{z}^{2}\tilde{h}(x,0)[z]+D_{z}^{3}\tilde{h}(x,0)[z]+\rho_{4}(\tilde{h},x)[z]
\end{align*}
so that schematically we have
\begin{align*}
(\tilde{f}(x,z)-\tilde{f}(x,0))&(\tilde{h}(x,z)-\tilde{h}(x,0))=D_{z}^{1}\tilde{f}(x,0)[z]\otimes D_{z}^{1}\tilde{h}(x,0)[z]\\
&+D_{z}^{1}\tilde{f}(x,0)[z]\otimes D_{z}^{2}\tilde{h}(x,0)[z]+D_{z}^{2}\tilde{f}(x,0)[z]\otimes D_{z}^{1}\tilde{h}(x,0)[z]\\
&+\left(D^{3}_{z}\tilde{f}(x,0)[z]\otimes D^{1}_{z}\tilde{h}(x,0)[z]+D^{1}_{z}\tilde{f}(x,0)[z]\otimes D^{3}_{z}\tilde{h}(x,0)[z]\right)\\
&+\left(D^{2}_{z}\tilde{f}(x,0)[z]\otimes D^{2}_{z}\tilde{h}(x,0)[z]+\sum_{\alpha=5}^{8}\sum_{\beta+\gamma =\alpha}D^{\beta}_{z}\tilde{f}[z]*D^{\gamma}_{z}\tilde{h}[z]\right)      
\end{align*}
where a term of the form $D^{k}\tilde{f}*D^{l}\tilde{h}[z]$ represents the product of a homogeneous term and a remainder term involving derivatives of order $k$ on $\tilde{f}$ and derivatives of order $l$ on $\tilde{h}$, and satisfying 
\begin{align*}
   D^{k}_{z}\tilde{f}*D^{l}_{z}\tilde{h}[z] = O(\|z\|^{k+l}_{d};\Sigma,J^{k}\tilde{f},J^{l}\tilde{h}).
\end{align*}
Note also that each term of the form 
\begin{align*}
    D_{z}^{k}\tilde{f}(x,0)[z]\otimes D_{z}^{l}\tilde{h}(x,0)[z]
\end{align*}
is homogeneous of degree $k+l$. We therefore have
\begin{align*}
    \Gamma_{t}(f,h)(x) = \langle\nabla f,\nabla h\rangle_{g}(x)+t\cdot P(J^{3}f,J^{3}h)(x)+O(t^{3/2};\Sigma,J^4f(x),J^4h(x)).
\end{align*}
Let us write now write (while implicitly defining discrepancy functions)
\begin{align}
    L_{t}f(x) &= \Delta_{g}f(x)+Q_1(t,x,J^{4}f)\label{LapSchem}\\
    \Gamma_{t}(f,h)(x) &= \langle \nabla f,\nabla h\rangle_{g}(x)+t\cdot P(J^{3}f,J^{3}h)(x)+O(t^{3/2};\Sigma,J^4f(x),J^4h(x)).\label{CDCSchem}
\end{align}
From \eqref{LapSchem} we have 
\begin{align*}
    (L_{t}f(y)&-L_{t}f(x))(h(y)-h(x))\\
    &=\left(\Delta_{g}f(y)-\Delta_{g}f(x)+Q_1(t,y,J^{4}f)-Q_1(t,x,J^{4}f)\right)(h(y)-h(x))
\end{align*}
and if we let $\upsilon(x)=\Delta_{g}f(x)$ we have 
\begin{align}
    \upsilon(y)-\upsilon(x) &= D^{1}_{z}\tilde{\upsilon}(x,0)[z]+\rho_{2}(\tilde{\upsilon},x)[z]\\
    h(y)-h(x) &=D^{1}_{z}\tilde{h}(x,0)[z]+\rho_{2}(\tilde{h},x)[z]  
\end{align}
and we write
\begin{align*}
 (L_{t}f(y)&-L_{t}f(x))(h(y)-h(x))\\
 &= D^{1}_{z}\tilde{\upsilon}(x,0)[z]\otimes D^{1}_{z}\tilde{h}(x,0)[z]+D^{1}_{z}\tilde{\upsilon}*D^{2}_{z}\tilde{h}[z]+D^{2}_{z}\tilde{\upsilon}*D^{1}_{z}\tilde{h}[z]\\
 &+(Q_1(t,y,J^{4}f)-Q_1(t,y,J^{4}f))(D^{1}_{z}\tilde{h}(x,0)[z]+\rho_{2}(\tilde{h},x)[z]).
\end{align*}
We conclude that 
\begin{align*}
    \Gamma_{t}(L_{t}f,h)(x) =\langle\nabla\Delta_{g}f,\nabla h\rangle(x) +O(t^{1/2};\Sigma,J^{4}f(x),J^{2}h(x)). 
\end{align*}
Finally, we have 
\begin{align*}
 \Gamma_{t}(f,h)(y)-\Gamma_{t}(f,h)(x)&= \langle \nabla f,\nabla h\rangle_{g}(y)-\langle \nabla f,\nabla h\rangle_{g}(x)\\
 &+ t\cdot (P(J^{3}f,J^{3}h)(y)-P(J^{3}f,J^{3}h)(x))+ O(t^{3/2};\Sigma,J^4f,J^4h)
\end{align*}
and if now $\upsilon(x) =\langle \nabla f,\nabla h\rangle_{g}(x)$, then we may write
\begin{align*}
    \upsilon(y)-\upsilon(x)=D^{1}_{z}\tilde{v}(x,0)[z]+D^2_{z}\tilde{\upsilon}(x,0)[z]+\rho_{3}(\tilde{\upsilon},x)[z]
\end{align*}
which implies
\begin{align*}
 \Gamma_{t}(f,h)(y)-\Gamma_{t}(f,h)(x)&= D^1_{z}\tilde{\upsilon}(x,0)[z]+D^2_{z}\tilde{\upsilon}(x,0)[z]+\rho_{3}(\tilde{\upsilon},x)[z]\\
 &+ t\cdot (P(J^{3}f,J^{3}h)(y)-P(J^{3}f,J^{3}h)(x))\\
 &+ O(t^{3/2};\Sigma,J^4f,J^4h)
\end{align*}
and using an inequality similar to \eqref{geodesic_estimate} we find that for $y$ in a geodesic ball of the form $B_{\varepsilon}(x)$ where $\varepsilon$ is as in \eqref{epschoice}%
\begin{align*}
\left|(P(J^{3}f,J^{3}h)(y)-P(J^{3}f,J^{3}h)(x))\right|\le C(\Sigma,\|f\|_{C^{4}(\Sigma)},\|h\|_{C^{4}(\Sigma)})d_{\Sigma}(x,y).
\end{align*}
We conclude that
\begin{align}
L_{t}(\Gamma_{t}(f,h))(x) = \Delta_{g}\langle \nabla f,\nabla h\rangle_{g}(x) +O(t^{1/2};\Sigma,J^{4}f(x),J^{4}h(x)),
\end{align}
which proves the corollary.
\end{proof}

\begin{proof}[Proof of Theorem \ref{extrinsic bias}] Theorem \ref{extrinsic bias} follows easily from \eqref{LCDC4} and \eqref{GammaL4}. 
\end{proof}

\section{Non-uniform density} \label{miscellaneous}

\subsection{Bias for Smooth Metric Measure Spaces with Density}
\bigskip We do not attempt to prove Theorem \ref{intrinsic bias} in full detail in order to avoid making the presentation of the article tedious. Instead we provide a basic plotline.  

\begin{itemize}
\item First, our expansion of integration density with the new weight
$e^{-\vartheta(x)}d\mu=e^{-\vartheta}d\mathrm{vol}_{\Sigma}$ gives a similar expansion%
\[
\frac{\theta_{t}(x)}{(2\pi t)^{d/2}}=e^{-\vartheta(x)}\left(  1+S_{1}(x)t+O(t^{2};\Sigma
)\right)
\]
where $S_{1}$ is a smooth term that depends on geometric data and also $\vartheta.$

\item Using the expansion%
\begin{align}
\left(  \frac{1}{1+\eta}\right)  ^{\alpha}=1-\alpha\eta+\alpha\left(
\alpha+1\right)  \eta^{2}+O\left(  \eta^{3};\Sigma\right)\label{inverse_alpha_theta}
\end{align}

for $\left\vert \eta\right\vert <\frac{1}{2}$ we  can expand and collect
\[
\frac{1}{\left[  \theta_{t}(x)\right]  ^{\alpha}}=(2\pi t)^{-\alpha
d/2}e^{\alpha \vartheta(x)}\left(  1+S_{2}(x)t+O(t^{2};\Sigma)\right)
\]
for some $S_{2}$ depending on geometric data, $\alpha\,\ $and $\vartheta.$ \ (This
requires $t$ chosen small, which we may assume.)

\item Inserting \eqref{inverse_alpha_theta} into the expression for $\theta_{t,\alpha}(x)$ defined in (\ref{alpha1}), we have
\begin{equation}
\theta_{t,\alpha}(x)=(2\pi t)^{\left(  1-\alpha\right)  d/2}e^{\alpha
\vartheta(x)}\left(  1+S_{3}(x)t+O(t^{2};\Sigma)\right)
.\label{todo es mentira en esta mundo}%
\end{equation}
for some $S_{2}$ depending on geometric data, $\alpha\,\ $and $\vartheta.$

\item Now turning to the expression $L_{t}^{\alpha}f(x)$ (\ref{alpha2}) we
apply first Lemma \ref{BigNasty} to
\begin{align*}
f(y)=e^{\left(  \alpha-1\right)  \vartheta(y)}\left(  1+S_{3}(y)t+O(t^{2};\Sigma)\right)
\left(  f(y)-f(x)\right)
\end{align*}
we see that
\begin{align*}
& \int_{\Sigma} e^{-\frac{\|x-y\|^2_{N}}{2t}}\frac{1}{\left[  \theta_{t}(y)\right]
^{\alpha}}\left(  f(y)-f(x)\right)  d\mu(y)\\
& =(2\pi t)^{\left(  1-\alpha\right)  d/2}e^{\left(  \alpha-1\right)
\vartheta(x)}\left(  \left(  \alpha-1\right)  \langle\nabla \vartheta,\nabla f\rangle_{g}(x)t+\frac{1}%
{2}\Delta_{g} f(x)t+O(t^{2};\Sigma)\right)
\end{align*}
which combines with (\ref{todo es mentira en esta mundo}), (also using Lemma
\ref{basic_monomials} as in (\ref{la verdad})) to give\
\[
L_{t}^{\alpha}f(x)=2\left(  \alpha-1\right)  \langle \nabla \vartheta, \nabla f(x) \rangle_g+\Delta_{g}
f(x)+S_{4}(x)t+O(t^{3/2};\Sigma)
\]
where now $S_{4}$ is an expression in the derivatives of $f$ as
in Lemma \ref{Laplacef}. This outlines a weighted version of Lemma \ref{Laplacef}. The arguments for lemmas  \ref{cdc1} through \ref{FDR} are similar. 
\end{itemize}

\section{Convergence of Coarse Ricci to Actual Ricci on Smooth
submanifolds}

\label{riccisubmanifold}
Before proving Theorem \ref{HereWeRecoverRicci}, we will derive several observations for linear functions defined in $\R^{N}$. First, let X be a vector in $\R^{N}$. The vector $X$ has both tangential and normal components to $\Sigma^{d}$ that we will denote the tangential and normal components of $X$ by $X^{T}$ and $X^{\perp}$ respectively. On the other hand, let $\nabla$ denote covariant differentiation with respect to the Euclidean metric in the ambient space $\R^{N}$, and let $\{n_{1},\ldots,n_{N-d}\}$ denote an orthonormal basis of vector fields for the normal space to $\Sigma$ in a neighborhood of $x$ in $\Sigma$. In this case, given vectors $X,Y$ in $T_{x}\Sigma$, the components of the second fundamental form $\mathrm{II}(X,Y)$ with respect to this basis can be identified with the scalars 
\begin{align*}
    \langle Y,\nabla_{X}n_{1}\rangle,\ldots,\langle Y,\nabla_{X}n_{N-d}\rangle
\end{align*}
in the following sense: in principle, the second fundamental form is given by $(\nabla_{X}Y)^{\perp}$, which is represented in these coordinates by 
\begin{align*}
    \sum_{l=1}^{N-d}\langle (\nabla_{X}Y)^{\perp},n_{l}\rangle n_{l}= -\sum_{l=1}^{N-d}\langle Y,\nabla_{X}n_{l}\rangle n_{l}.
\end{align*}
We will denote the $l$-th component of the second fundamental form by 
\begin{align*}
    \mathrm{II}_{l}(X,Y) =\langle\nabla_{X}n_{l},Y\rangle,
\end{align*}
which strictly speaking has a sign discrepancy with the second fundamental form, but this discrepancy will not be important for the analysis below. With this in mind, we have the following lemma:
\begin{lemma}\label{linear}
Let $f:\R^{N}\rightarrow\R$ be a linear function written as $f(x)=\langle\zeta,x\rangle$ for some fixed $\zeta$ in $\R^{N}$ and where $\langle\cdot,\cdot\rangle$ is the Euclidean inner product in $\R^{N}$. Let $x$ be a point in $\Sigma^{d}$ and let $\{n_{1},\ldots,n_{N-d}\}$ be an orthonormal basis of vector fields for the normal space to $\Sigma$ near $x$. Finally, let $\{\mathrm{II}_{}\}_{l=1,\ldots,N-d}$ be the components of the second fundamental form $\mathrm{II}$ with respect to this basis. Then, if we let $\overline{\nabla}$ denote covariant differentiation with respect to the induced metric $g_{\Sigma}$ and if $X,Y,Z,U$ are vectors in $ T_{x}\Sigma$ we have    
\begin{align}
    \overline{\nabla}_{Y}\overline{\nabla}_{X}f = -\sum_{l=1}^{N-d}\mathrm{II}_{l}(X,Y)\langle \zeta,n_{l}\rangle,\label{SecondDerivative}
\end{align}
and also
\begin{align}    \overline{\nabla}_{Z}\overline{\nabla}_{Y}\overline{\nabla}_{X}f &=-\sum_{l=1}^{N-d}\left(\overline{\nabla}_{Z}\mathrm{II}_{l}(X,Y)\langle\zeta,n_{l}\rangle+\mathrm{II}_{l}(Z,\zeta^{T})\otimes\mathrm{II}_{l}(X,Y)\right)\label{ThirdDerivative}
\end{align}
where $\overline{\nabla}_{Z}\mathrm{II}_{l}(X,Y)$ is the covariant derivative of the tensor $\mathrm{II}_{l}(X,Y)$ as a tensor in $S^{2}\left(T^{*}\Sigma\otimes T^{*}\Sigma\right)$. Finally, for fourth order derivatives we have the following identity
\begin{align*}
    &\overline{\nabla}_{U}\overline{\nabla}_{Z}\overline{\nabla}_{Y}\overline{\nabla}_{X}f = -\sum_{l=1}^{N-d}\left(\overline{\nabla}_{U}\overline{\nabla}_{Z}\mathrm{II}_{l}(X,Y)\langle\zeta,n_{l}\rangle+\mathrm{II}_{l}(\zeta^{T},U)\otimes\overline{\nabla}_{Z}\mathrm{II}_{l}(X,Y)\right)\\
    &+\sum_{l=1}^{N-d}\left(\left(\sum_{m=1}^{N-d}\langle n_{m},\zeta\rangle\langle W_{l}(Z),W_{m}(U)\rangle-\langle\zeta,\overline{\nabla}W_{l}(U,Z)\rangle\right)\otimes \mathrm{II}_{l}(X,Y)\right)\\
    &+\sum_{l=1}^{N-d}\left(\mathrm{II}_{l}(Z,\zeta^{T})\otimes\overline{\nabla}_{U}\mathrm{II}_{l}(X,Y)\right)
\end{align*}
where $W_{l}:T_{x}\Sigma\rightarrow T_{x}\Sigma$ is the Weingarten map given by $W_{l}(x)=\nabla_{X}n_{l}$, and $\overline{\nabla}W_{l}(U,Z)$ is the covariant derivative of $W_{l}(Z)$ as a $(1,1)$-tensor.
\end{lemma}

\begin{proof}
Let $X$ be a vector in $T_{x}\Sigma$, then we clearly have 
\begin{align*}
    \overline{\nabla}_{X}f=\langle\zeta,X\rangle = \langle\zeta^{T},X\rangle,
\end{align*}
and for the second covariant derivatives we have 
\begin{align*}
    \overline{\nabla}_{Y}\overline{\nabla}_{X}f &= \nabla_{Y}\langle\zeta,X\rangle -\langle\zeta,\overline{\nabla}_{Y}X\rangle =\langle\zeta,\nabla_{Y}X\rangle-\langle\zeta,\overline{\nabla}_{Y}X\rangle\\
    &=\langle\zeta,(\nabla_{Y}X)^{\perp}\rangle =\sum_{l=1}^{N-d}\langle\nabla_{Y}X,n_{l}\rangle\langle n_{l},\zeta\rangle\\
    &= -\sum_{l=1}^{N-d}\langle X,\nabla_{Y} n_{l}\rangle\langle \zeta,n_{l}\rangle\\
    &=-\sum_{l=1}^{N-d}\mathrm{II}_{l}(X,Y)\langle\zeta,n_{l}\rangle,
\end{align*}
which proves \eqref{SecondDerivative}.
For the third derivatives in \eqref{ThirdDerivative} we have
\begin{align*}
   \overline{\nabla}_{Z}\overline{\nabla}_{Y}\overline{\nabla}_{X}f &=-\sum_{l=1}^{N-d}\left(\overline{\nabla}_{Z}\mathrm{II}_{l}(X,Y)\langle\zeta,n_{l}\rangle+\overline{\nabla}_{Z}\langle\zeta,n_{l}\rangle\otimes\mathrm{II}_{l}(X,Y)\right)\\
   &=-\sum_{l=1}^{N-d}\left(\overline{\nabla}_{Z}\mathrm{II}_{l}(X,Y)\langle\zeta,n_{l}\rangle+\langle\zeta,\nabla_{Z}n_{l}\rangle\otimes\mathrm{II}_{l}(X,Y)\right)\\
   &=-\sum_{l=1}^{N-d}\left(\overline{\nabla}_{Z}\mathrm{II}_{l}(X,Y)\langle\zeta,n_{l}\rangle+\langle\zeta^{T},\nabla_{Z}n_{l}\rangle\otimes\mathrm{II}_{l}(X,Y)\right)\\
   &=-\sum_{l=1}^{N-d}\left(\overline{\nabla}_{Z}\mathrm{II}_{l}(X,Y)\langle\zeta,n_{l}\rangle+\mathrm{II}_{l}(\zeta^{T},Z)\otimes\mathrm{II}_{l}(X,Y)\right).
\end{align*}
The proof for the fourth derivatives follows a similar strategy with the exception of the terms that contain covariant derivatives of the form $\overline{\nabla}_{U}\mathrm{II}_{l}(\zeta^{T},Z)$,which we now compute: observe that $\overline{\nabla}_{U}\mathrm{II}_{l}(\zeta^{T},Z)$ is the covariant derivative of the 1-form $\eta(Z)=\mathrm{II}_{l}(\zeta^{T},Z)$ which we compute as 
\begin{align*}
    \overline{\nabla}_{U}\eta(Z) &= U(\eta(Z))-\eta(\overline{\nabla}_{U}Z)\\
    &=U\langle\zeta,\nabla_{Z}n_{l}\rangle-\langle\zeta,\nabla_{\overline{\nabla}_{U}Z}n_{l}\rangle\\
    &=\langle\zeta,\nabla_{U}W_{l}(Z)\rangle-\langle\zeta,W_{l}(\overline{\nabla}_{U}Z)\rangle,
\end{align*}
and we may write
\begin{align*}
 \nabla_{U}W_{l}(Z)&=\left(\nabla_{U}W_{l}(Z)\right)^{T}+\left(\nabla_{U}W_{l}(Z)\right)^{\perp}\\
 &=\overline{\nabla}_{U}W_{l}(Z)+\left(\nabla_{U}W_{l}(Z)\right)^{\perp}.
 \end{align*}
In order to compute $\left(\nabla_{U}W_{l}(Z)\right)^{\perp}$, we write
\begin{align*}
    \left(\nabla_{U}W_{l}(Z)\right)^{\perp} &= \sum_{m=1}^{N-d}\langle\nabla_{U}W_{l}(Z),n_{m}\rangle n_{m}\\
    & = -\sum_{m=1}^{N-d}\langle W_{l}(Z),\nabla_{U}n_{m}\rangle n_{m}\\
    & = -\sum_{m=1}^{N-d}\langle W_{l}(Z),W_{m}(U)\rangle n_{m}.
\end{align*}
We have shown 
\begin{align*}
\overline{\nabla}_{U}\eta(Z) =-\sum_{m=1}^{N-d}\langle W_{l}(Z),W_{m}(U)\rangle\langle\zeta,n_{m}\rangle+\langle\overline{\nabla}_{U}W_{l}(Z)-W_{l}(\overline{\nabla}_{U}Z),\zeta\rangle
\end{align*}
and observe that $\overline{\nabla}_{U}W_{l}(Z)-W_{l}(\overline{\nabla}_{U}Z) = \overline{\nabla}W_{l}(U,Z)$ when one views $W_{l}(Z)$ as a $(1,1)$-tensor. This completes the proof.
\end{proof}

We have proved the following
\begin{corollary}\label{corLinear}
Let $f:\R^{N}\rightarrow\R$ be a linear function written as $f(x)=\langle\zeta,x\rangle$. Then for $2\le k\le 4$ we have
\begin{align*}
    \|\nabla^{k}_{g_{\Sigma}}f\|_{C^{0}(\Sigma)}\le C_{k}\|\zeta\|_{N}
\end{align*}
where $C_k$ depends on $C^{k-2}(\Sigma)$ bounds on the second fundamental form $\mathrm{II}$.
\end{corollary}

We now prove Theorem \ref{HereWeRecoverRicci}:


\begin{proof}
[Proof of Theorem \ref{HereWeRecoverRicci}] In the discussion that follows $g$ represents the metric $g_{\Sigma}$. We also write $\nabla_{g}f$ instead of $\nabla f$ to stress that we are performing differentiation on the submanifold and not on the ambient space. Our goal is to show that
\[
|\mathrm{Ric}_{g}(\gamma^{\prime}(0),\gamma^{\prime}(0))-\mathrm{RIC}_{L_{t}%
}(x,\gamma(s))|\leq C_{1}t^{1/2}+C_{2}s.
\]
First, note that letting
\[
f_{s}=F_{x,\gamma(s)}(\cdot)=\left\langle \frac{\gamma(s)-x}{\left\Vert
\gamma(s)-x\right\Vert_{N} },\cdot\right\rangle
\]
and
\[
f_{0}=\langle\gamma^{\prime}(0),\cdot\rangle
\]
we have
\begin{align}
\left\vert \mathrm{RIC}_{L_{t}}(x,\gamma(s))-\mathrm{Ric}(\gamma^{\prime
}(0),\gamma^{\prime}(0))\right\vert  &  =\left\vert \Gamma_{2}(L_{t}%
,f_{s},f_{s})(x)-\mathrm{Ric}(\gamma^{\prime}(0),\gamma^{\prime}%
(0))\right\vert \\
&  =\left\vert
\begin{array}
[c]{c}%
\left(\Gamma_{2}(L_{t},f_{s},f_{s})(x)-\Gamma_{2}(\triangle_{g},f_{s},f_{s})(x)\right)\\
+\left(\Gamma_{2}(\triangle_{g},f_{s},f_{s})(x)-\Gamma_{2}(\triangle_{g},f_{0}%
,f_{0})(x)\right)\\
+\left(\Gamma_{2}(\triangle_{g},f_{0},f_{0})(x)-\mathrm{Ric}(\gamma^{\prime
}(0),\gamma^{\prime}(0))\right)
\end{array}
\right\vert \\
&  \leq C(\Sigma,\|f_{s}\|_{C^{4}(\Sigma)})t^{1/2}+C(\Sigma,\|\mathrm{II}\|_{C^{1}(\Sigma)})s\\
&\le C(\Sigma,\|\mathrm{II}\|_{C^{2}(\Sigma)})t^{1/2}+C(\Sigma,\|\mathrm{II}\|_{C^{1}(\Sigma)})s.
\end{align}
Here we have used the following facts:
\begin{itemize}
\item First,
\begin{align*}
\left\vert \Gamma_{2}(L_{t},f_{s},f_{s})(x)-\Gamma_{2}(\triangle_{g}%
,f_{s},f_{s})(x)\right\vert \leq C(\Sigma,\|f_{s}\|_{C^{4}(\Sigma)})t^{1/2}
\end{align*}
by Lemma \ref{l319}  and \ref{FDR}. Since $f_{s}$ is a 1-parameter family of linear functions defined on $\R^{N}$, we obtain from Lemma \ref{linear} the bound in absolute value
\begin{align*}
\left\vert \Gamma_{2}(L_{t},f_{s},f_{s})(x)-\Gamma_{2}(\triangle_{g}%
,f_{s},f_{s})(x)\right\vert &\leq C(\Sigma,\|\mathrm{II}\|_{C^{2}(\Sigma)})\|\zeta_{s}\|_{N}t^{1/2}\\
& =C(\Sigma,\|\mathrm{II}\|_{C^{2}(\Sigma)})t^{1/2} 
\end{align*}
where 
\begin{align*}
\zeta_{s}=\frac{\gamma(s)-x}{\left\vert\left\vert\gamma(s)-x\right\vert\right\vert_{N}}.
\end{align*}
\item Second, a straightforward computation yields that for any two functions
$f_{0},f_{s}$
\begin{align*}
 \Gamma_{2}(\Delta_{g},f_{s},f_{s})-\Gamma_{2}(\Delta_{g},f_{0},f_{0})&= \left\langle\left(\Delta_{g}\nabla_{g}-\nabla_{g}\Delta_{g}\right)(f_{s}-f_{0}),\nabla_{g}f_{s}\right\rangle_{g}\\
 &+\left\langle(\Delta_{g}\nabla_{g}-\nabla_{g}\Delta_{g})(f_{s}-f_{0}),\nabla_{g}f_{0}\right\rangle_{g}\\
 &+\langle\nabla^{2}_{g}(f_{s}-f_{0}),\nabla^{2}_{g}f_{s}\rangle_{g}\\
 &+\langle\nabla^{2}_{g}(f_{s}-f_{0}),\nabla^{2}_{g}f_{0}\rangle_{g}
\end{align*}
and therefore, we have for every $x$ the inequality
\begin{align*}
\left| \Gamma_{2}(\Delta_{g},f_{s},f_{s})(x)-\Gamma_{2}(\Delta_{g},f_{0},f_{0})(x)\right|&\leq C_{d}\left\|\nabla^{3}_{g}(f_{s}-f_{0})\right\|_{C^{0}(\Sigma)}\left(\left\|\nabla_{g}f_{s}\right\|_{C^{0}(\Sigma)}+\left\|\nabla_{g}f_{0}\right\|_{C^{0}(\Sigma)}\right)\\
&+\left\|\nabla^{2}_{g}(f_{s}-f_{0})\right\|_{C^{0}(\Sigma)}\left(\left\|\nabla^{2}_{g}f_{s}\right\|_{C^{0}(\Sigma)}+\left\|\nabla^{2}_{g}f_{0}\right\|_{C^{0}(\Sigma)}\right)
\end{align*}
where $C_{d}$ is a dimensional constant, and by Corollary \ref{corLinear} we have
\begin{align*}
 \left| \Gamma_{2}(\Delta_{g},f_{s},f_{s})(x)-\Gamma_{2}(\Delta_{g},f_{0},f_{0})(x)\right|&\le C(\Sigma,\|\mathrm{II}\|_{C^{1}(\Sigma)})\|\zeta_{s}-\gamma^{\prime}(0)\|_{N}\left(\|\zeta_{s}\|_{N}+\|\gamma^{\prime}(0)\|_{N}\right)\\
 &+C(\Sigma,\|\mathrm{II}\|_{C^{0}(\Sigma)})\|\zeta_{s}-\gamma^{\prime}(0)\|_{N}\left(\|\zeta_{s}\|_{N}+\|\gamma^{\prime}(0)\|_{N}\right)
\end{align*}
and note that 
\begin{align*}
    \|\zeta_{s}-\gamma(0)\|_{N}\le M\cdot s
\end{align*}
where $M$ is a bound on the second derivatives of $\gamma(s)$ for $s$ near $0$. Note that the constant $M$ is in general comparable to the a bound on the absolute value of the curvature of the curve $\zeta_{s}$ near $s=0$. We therefore have

\begin{align*}
  \left| \Gamma_{2}(\Delta_{g},f_{s},f_{s})(x)-\Gamma_{2}(\Delta_{g},f_{0},f_{0})(x)\right|&\le C\left(\Sigma,\|\mathrm{II}\|_{C^{1}(\Sigma)}\right)\cdot M\cdot s.    
\end{align*}
Observe that we may avoid first covariant derivatives of the second fundamental form in the above constant by writing 
\begin{align*}
\left\langle\left(\Delta_{g}\nabla_{g}-\nabla_{g}\Delta_{g}\right)(f_{s}-f_{0}),\nabla_{g}f_{s}\right\rangle_{g} &= \mathrm{Ric}_{g}(\nabla_{g}(f_{s}-f_{0}),\nabla_{g}f_{s})\\
&+\mathrm{Ric}_{g}(\nabla_{g}(f_{s}-f_{0}),\nabla_{g}f_{0}),
\end{align*}
but in this case we would have to include bounds on the Ricci curvature in the constant $C$. 

\item Finally, because
\begin{align*}
\nabla_{g} f_{0}=(\gamma^{\prime}(0))^{T} = \gamma^{\prime}(0),
\end{align*}
we have by the Bochner formula%
\begin{equation}
\Gamma_{2}(\triangle_{g},f_{0},f_{0})(x)-\mathrm{Ric}_{g}(\gamma^{\prime
}(0),\gamma^{\prime}(0))=\left\Vert \nabla_{g}^{2}f_{0}\right\Vert ^{2}_{g}.
\label{hesscoord}%
\end{equation}
And using that $\gamma^{\prime}(0)$ is tangent to $\Sigma$ at $x$ we conclude from \eqref{SecondDerivative} in Lemma \ref{linear} that (\ref{hesscoord}) vanishes.
\end{itemize}
\end{proof}

\appendix

\section{Integrals of monomials}\label{appendixA}

We have encountered quite a few integrals of products of monomials and
Gaussian functions on balls centered at the origin of $\mathbb{R}^{d}$. It is
important to point out that these integrals will be computed with respect to
the Lebesgue measure of the Euclidean metric. We now explain how to compute
these integrals in a systematic way

\begin{lemma}
\label{basic_monomials} Let $z=(z_{1},\ldots,z_{d})\in\mathbb{R}^{d}$ and
consider a subset $\{i_{1},\ldots,i_{p}\}$ of $\{1,\ldots,d\}$. Given positive
integers $\alpha_{i_{1}},\ldots,\alpha_{i_{p}}$ we have
\begin{align}
\frac{1}{(2\pi)^{d/2}}\int_{\mathbb{R}^{d}}e^{-\frac{\|z\|_{d}^{2}}{2}%
}z^{\alpha_{i_{1}}}_{i_{1}}\cdot\ldots\cdot z^{\alpha_{i_{p}}}_{i_{p}%
}dz=\omega_{\alpha_{i_{1}},\ldots,\alpha_{i_{p}}} \label{intRd}%
\end{align}
where

\begin{itemize}
\item $\alpha=\alpha_{i_{1}}+\ldots+\alpha_{i_{p}}$

\item $\omega_{\alpha_{i_{1}},\ldots,\alpha_{i_{p}}}$ is defined by
\begin{align*}
\omega_{\alpha_{i_{1}}\ldots\alpha_{i_{p}}}=\left\{
\begin{array}
[c]{cc}%
0 & ~\text{if any}~\alpha_{i_{j}}~\text{is odd},~j=1,\ldots,p\\
\displaystyle{\prod_{j=1}^{p}}(2k_{i_{j}}-1)!! & ~\text{if}~\alpha_{i_{j}%
}=2k_{i_{j}}~\text{for}~j=1,\ldots,p
\end{array}
\right.
\end{align*}

\end{itemize}

In the above description, the number $(2k_{i_{j}}-1)!!$ is the \emph{odd
factorial}, that is the product of all odd numbers $1,3,\ldots,2k_{i_{j}}-1$.
\end{lemma}

A similar computation that we will need is
\[
\int_{\mathbb{R}^{d}}\Vert z\Vert_{d}^{k}e^{-\frac{\Vert z\Vert_{d}^{2}}{2}%
}dz=\mathrm{vol}\left(  S^{d-1}\right)  2^{\frac{k+d-2}{2}}\Gamma\left(
\frac{k+d}{2}\right)
\]
where $\Gamma\left(  \frac{k+d-1}{2}\right)  $ denotes the Gamma function and
$\mathrm{vol}\left(  S^{d-1}\right)  $ denotes the volume of the sphere
$S^{d-1}$ with respect to the round metric. Observe that
\[
\mathrm{vol}(S^{d-1})=\frac{(2\pi)^{d/2}}{2^{(d-2)/2}\Gamma(d/2)}%
\]
which implies
\[
\int_{\mathbb{R}^{d}}\Vert z\Vert_{d}^{k}e^{-\frac{\Vert z\Vert_{d}^{2}}{2}%
}dz=(2\pi)^{d/2}2^{k/2}\frac{\Gamma\left(  \frac{k+d}{2}\right)  }%
{\Gamma(\frac{d}{2})}%
\]
and by scaling we have the identity
\[
\frac{1}{(2\pi t)^{d/2}}\int_{\mathbb{R}^{d}}\Vert z\Vert_{d}^{k}%
e^{-\frac{\Vert z\Vert_{d}^{2}}{2t}}dz=t^{k/2}2^{k/2}\frac{\Gamma
((k+d)/2)}{\Gamma(d/2)}.
\]
Notice also that using $z=\sqrt{2}w$ we can compute
\begin{align}
\int_{\mathbb{R}^{d}}\Vert z\Vert_{d}^{k}e^{-\frac{\Vert z\Vert_{d}^{2}}{4t}%
}dz &  =\int_{\mathbb{R}^{d}}\Vert\sqrt{2}w\Vert_{d}^{k}e^{-\frac{\Vert
w\Vert_{d}^{2}}{2t}}\left(  \frac{1}{\sqrt{2}}\right)  ^{d}dw\nonumber\\
&  =(2\pi t)^{d/2}t^{k/2}2^{\left(  2k-d\right)  /2}\frac{\Gamma
((k+d)/2)}{\Gamma(d/2)}.\label{rescaled}%
\end{align}
From the above we observe that if $f\left(  z\right)  =O\left(  \left\Vert
z\right\Vert _{d}^{k}\right)  $ and $f(z)$ has polynomial growth as $\Vert
z\Vert_{d}\rightarrow\infty$, then
\begin{align}
\left\vert \int_{\mathbb{R}^{d}}f\left(  z\right)  e^{-\frac{\Vert z\Vert
_{d}^{2}}{2t}}dz\right\vert  &  =\left\vert \int_{B_{\varepsilon}(0)}f\left(
z\right)  e^{-\frac{\Vert z\Vert_{d}^{2}}{2t}}dz+\int_{\mathbb{R}%
^{d}\backslash B_{\varepsilon}(0)}f\left(  z\right)  e^{-\frac{\Vert
z\Vert_{d}^{2}}{2t}}dz\right\vert \label{basicbound}\\
&  \leq C_{0}(2\pi t)^{d/2}t^{k/2}2^{k/2}\frac{\Gamma((k+d)/2)}{\Gamma
(d/2)}+O(e^{-\frac{\varepsilon}{2t}})\nonumber
\end{align}
for some constant $C_{0}>0$, and similarly
\begin{equation}
\left|\int_{\mathbb{R}^{d}}f\left(  z\right)  e^{-\frac{\Vert z\Vert_{d}^{2}}{4t}%
}dz\right|\leq(2\pi t)^{d/2}t^{k/2}2^{\left(  2k-d\right)  /2}\frac{\Gamma
((k+d)/2)}{\Gamma(d/2)}+O\left(e^{-\frac{\varepsilon}{4t}}\right).\label{bb2}%
\end{equation}

We now state the main lemma in this section

\begin{lemma}[Odd-cancellation]
\label{local_moments} Let $\{i_{1},\ldots,i_{p}\}$ be a subset of
$\{1,\ldots,d\}$ and let $\alpha_{i_{1}},\ldots,\alpha_{i_{p}}$ be positive
integers and let $\alpha=\alpha_{i_{1}}+\ldots+\alpha_{i_{p}}$. Then for any
$\varepsilon>0$ we have

\begin{enumerate}
\item If all $\alpha_{i_{j}},\ldots,\alpha_{i_{p}}$ are even then
\begin{align*}
\frac{1}{(2\pi t)^{d/2}}\int_{B^{d}_{\varepsilon}(0)}e^{-\frac{\Vert z\Vert
_{d}^{2}}{2t}}z_{i_{1}}^{\alpha_{i_{1}}}\cdot\ldots\cdot z_{i_{p}}%
^{\alpha_{i_{p}}}dz=t^{\alpha/2}\cdot\omega_{\alpha_{i_{1}},\ldots
,\alpha_{i_{p}}}+O\left(  t^{\alpha/2}\cdot e^{-\frac{\varepsilon^{2}}{4t}%
}\right)  .
\end{align*}

\item If at least one $\alpha_{i_{i}},\ldots,\alpha_{i_{p}}$ is odd
\begin{align}
\frac{1}{(2\pi t)^{d/2}}\int_{B^{d}_{\varepsilon}(0)}e^{-\frac{\Vert z\Vert
_{d}^{2}}{2t}}z_{i_{1}}^{\alpha_{i_{1}}}\cdot\ldots\cdot z_{i_{p}}%
^{\alpha_{i_{p}}}dz=0.
\end{align}

\end{enumerate}
\end{lemma}

\subsection{Proofs of Lemmas}

\begin{proof}
[Proof of Lemma \ref{basic_monomials}]The computation of the integral on the
left-hand side of \eqref{intRd} is in fact equivalent to computing a moment of
order $\alpha$ of a multivariate normal random vector. More specifically, let
$\xi_{1},\ldots,\xi_{d}$ be independent and identically distributed (i.i.d.)
random variables and assume that the common distribution of these random
variables is $\mathcal{N}(0,1)$, i.e., a normal random variable with mean 0
and variance 1. In this case, the random vector $\xi=(\xi_{1},\ldots,\xi_{d})$
has a multivariate normal distribution with mean vector $\mu=(0,\ldots,0)$ and
co-variance matrix $I_{d}$. Using that the components of the random vector
$\xi$ are independent we have
\begin{align*}
\frac{1}{(2\pi)^{d/2}}\int_{\mathbb{R}^{d}}e^{-\frac{\Vert z\Vert_{d}^{2}}{2}%
}z_{i_{1}}^{\alpha_{i_{1}}}\cdot\ldots\cdot z_{i_{p}}^{\alpha_{i_{p}}}dz &
=\mathbb{E}\left(  \xi_{i_{1}}^{\alpha_{i_{1}}}\cdot\ldots\cdot\xi_{i_{p}%
}^{\alpha_{i_{p}}}\right)  \\
&  =\mathbb{E}\left(  \xi_{i_{1}}^{\alpha_{i_{1}}}\right)  \cdots\ldots
\cdots\mathbb{E}\left(  \xi_{i_{p}}^{\alpha_{i_{p}}}\right)
\end{align*}
where $\mathbb{E}$ is the expectation with respect to the multivariate normal
distribution $\mathcal{N}(\mu,I_{d})$. In particular we may view each factor
$\mathbb{E}(\xi_{j}^{\alpha_{i_{j}}})$ as an expectation with respect to the
one-dimensional normal random variable $\mathcal{N}(0,1)$. Now, for
$j=1,\ldots,d,$ we have
\[
\mathbb{E}\left(  \xi_{i_{1}}^{\alpha_{i_{1}}}\right)  =\frac{1}{\sqrt{2\pi}%
}\int_{-\infty}^{\infty}e^{-\frac{x^{2}}{2}}x^{\alpha_{i_{j}}}dx.
\]
Observe that by symmetry, if $\alpha_{i_{j}}$ is odd we have $\mathbb{E}%
\left(  \xi_{i_{1}}^{\alpha_{i_{1}}}\right)  =0$. On the other hand, if
$\alpha_{i_{j}}$ is even we write $\alpha_{i_{j}}=2k_{i_{j}}$ so that the
computation of $\mathbb{E}\left(  \xi_{i_{1}}^{\alpha_{i_{1}}}\right)  $
reduces to the integral of $x^{2k_{i_{j}}}e^{-\frac{x^{2}}{2}}$, or in other
words, the $2k_{i_{j}}$ moment of an univariate normal random variable with
mean zero and variance 1. Even though this computation is well-known, we
illustrate a simple way to compute this moment for the reader's convenience.
Observe that for the standard normal random variable we have
\[
\frac{1}{(2\pi)^{1/2}}\int_{-\infty}^{\infty}e^{-\frac{x^{2}}{2}}dx=1,
\]
and by scaling, if we consider a parameter $s>0$ we have
\[
\int_{-\infty}^{\infty}e^{-\frac{sx^{2}}{2}}dx=\sqrt{2\pi}s^{-1/2}%
\]
and differentiating both sides $k_{i_{j}}$ times and evaluating at $s=1$ we
obtain
\begin{align*}
\frac{(-1)^{k}}{2^{k}}\int_{-\infty}^{\infty}x^{2k_{i_{j}}}e^{-x^{2}/2}dx &
=(-1)^{k}\sqrt{2\pi}\frac{1}{2}\cdot\frac{3}{2}\cdot\ldots\cdot\frac
{2k_{i_{j}}-1}{2}\\
&  =\frac{(-1)^{k}}{2^{k}}\sqrt{2\pi}(2k_{i_{j}}-1)!!
\end{align*}
We obtain $\mathbb{E}(\xi_{i_{j}}^{\alpha_{i_{j}}})=(2k_{i_{j}}-1)!!$ which
clearly implies the rest of the lemma.
\end{proof}

\begin{proof}
[Proof of Lemma \ref{local_moments}]We start by writing
\begin{align*}
&  \frac{1}{(2\pi t)^{d/2}}\int_{B^{d}_{\varepsilon}(0)}e^{-\frac{\|z\|_{d}^{2}%
}{2t}}z_{i_{1}}^{\alpha_{i_{1}}}\cdot\ldots\cdot z^{\alpha_{i_{p}}}_{i_{p}%
}dz\\
&  =\frac{1}{(2\pi t)^{d/2}}\int_{\mathbb{R}^{d}}e^{-\frac{\|z\|^{2}_{d}}{2t}%
}z_{i_{1}}^{\alpha_{i_{1}}}\cdot\ldots\cdot z^{\alpha_{i_{p}}}_{i_{p}}%
dz+\frac{1}{(2\pi t)^{d/2}}\int_{\mathbb{R}^{d}\setminus B^{d}_{\varepsilon}%
(0)}e^{-\frac{\|z\|^{2}_{d}}{2t}}z_{i_{1}}^{\alpha_{i_{1}}}\cdot\ldots\cdot
z^{\alpha_{i_{p}}}_{i_{p}}dz\\
&  =\mathrm{I}+\mathrm{II}%
\end{align*}

By scaling, i.e., using $z=\sqrt{t}w$
\begin{align}
\mathrm{I}  &  =\frac{1}{(2\pi t)^{d/2}}\int_{\mathbb{R}^{d}}e^{-\frac{\Vert
z\Vert_{d}^{2}}{2t}}z_{i_{1}}^{\alpha_{i_{1}}}\cdot\ldots\cdot z_{i_{p}%
}^{\alpha_{i_{p}}}dz\nonumber\\
&  =\frac{1}{(2\pi t)^{d/2}}\int_{\mathbb{R}^{d}}e^{-\frac{\Vert w\Vert
_{d}^{2}}{2}}\left(  \sqrt{t}w\right)  _{i_{1}}^{\alpha_{i_{1}}}\cdot
\ldots\cdot\left(  \sqrt{t}w\right)  _{i_{p}}^{\alpha_{i_{p}}}t^{d/2}%
dw.\nonumber\\
&  =\frac{t^{\frac{\alpha}{2}}}{(2\pi)^{d/2}}\int_{\mathbb{R}^{d}}%
e^{-\frac{\Vert w\Vert_{d}^{2}}{2}}w_{i_{1}}^{\alpha_{i_{1}}}\cdot\ldots\cdot
w_{i_{p}}^{\alpha_{i_{p}}}dw=t^{\alpha/2}\omega_{\alpha_{i_{1}},\ldots
,\alpha_{i_{p}}} \label{intgauss}%
\end{align}
Note that \eqref{intgauss} follows from Lemma \eqref{basic_monomials}. We now
need to estimate $\mathrm{II}$. Observe that from equation \eqref{rescaled} we
have
\begin{align*}
\left\vert \mathrm{II}\right\vert  &  =\frac{1}{(2\pi t)^{d/2}}\left\vert
\int_{\mathbb{R}^{d}\setminus B^{d}_{\varepsilon}(0)}e^{-\frac{\Vert z\Vert
_{d}^{2}}{2t}}z_{i_{1}}^{\alpha_{i_{1}}}\cdot\ldots\cdot z_{i_{p}}%
^{\alpha_{i_{p}}}dz\right\vert \\
&  =\frac{1}{(2\pi t)^{d/2}}\left\vert \int_{\mathbb{R}^{d}\setminus
B^{d}_{\varepsilon}(0)}e^{-\frac{\Vert z\Vert_{d}^{2}}{4t}-\frac{\Vert z\Vert
_{d}^{2}}{4t}}z_{i_{1}}^{\alpha_{i_{1}}}\cdot\ldots\cdot z_{i_{p}}%
^{\alpha_{i_{p}}}dz\right\vert \\
&  \leq\frac{e^{-\frac{\varepsilon}{4t}}}{(2\pi t)^{d/2}}\int_{\mathbb{R}^{d}%
}e^{-\frac{\Vert z\Vert_{d}^{2}}{4t}}\Vert z\Vert^{\alpha}dz\\
&  =e^{-\frac{\varepsilon}{4t}}2^{(2\alpha+d)/2}t^{\alpha/2}\left(\frac{\Gamma((k+\alpha)/2)}{\Gamma(\alpha/2)}\right)+O\left(e^{-\frac{\varepsilon}{4t}}\right)
\end{align*}
This proves the lemma.
\end{proof}

\section{\bigskip Proof of Expansion Lemma \ref{BigNasty}}\label{appendixB}

\begin{proof}[Proof of Lemma \ref{BigNasty}]
Recall the following expansions (cf. equation \eqref{l98})
\[
e^{\frac{\phi(z)}{2t}}=1+\frac{p_{4}(\phi,x)[z]+p_{5}(\phi,x)[z]}{2t}%
+\frac{\varrho_{6}(\phi,x)[z]}{2t}+\varrho(\phi,x,t)[z]
\]
and
\[
d\mu=\left(  1+\sum_{j=2}^{m}p_{j}(d\mu,x)[z]+\varrho_{m+1}(d\mu,x)[z]\right)
dz.
\]
Then%
\begin{subequations}
\begin{equation}
\frac{1}{(2\pi t)^{d/2}}\int_{B_{\varepsilon}^{d}(0)}e^{-\frac{\Vert
x-\exp_{x}z\Vert_{N}^{2}}{2t}}\tilde{w}(x,z)d\mu(z)=\frac{1}{(2\pi t)^{d/2}%
}\int_{B_{\varepsilon}^{d}(0)}e^{-\frac{\Vert z\Vert_{d}^{2}}{2t}}%
e^{\frac{\phi(z)}{2t}}\tilde{w}(x,z)d\mu(z)\label{Big1}%
\end{equation}
by definition of $\phi(z).$ We now look at expansion of the integrand, Taylor expanding $\tilde{w}(x,z)$
in terms of $z$ at $z=0.$ 
\end{subequations}
\[
e^{\frac{\phi(z)}{2t}}\tilde{w}(x,z)d\mu=\left\{
\begin{array}
[c]{c}%
\left(  1+\frac{p_{4}(\phi,x)[z]+p_{5}(\phi,x)[z]}{2t}+\frac{\varrho_{6}%
(\phi,x)[z]}{2t}+\varrho(\phi,x,t)[z]\right)  \times\\
\left(  \displaystyle{\sum_{j=0}^{4}}D_{z}^{j}\tilde{w}(x,0)+\varrho_{5}(\tilde
{w},x)[z]\right)  \times\\
\left(  1+\displaystyle{\sum_{j=2}^{4}}p_{j}(d\mu,x)[z]+\varrho_{5}(d\mu,x)[z]\right)
\end{array}
\right\}dz.
\]
We group these by order of polynomial in $z$, and order of remainder. We
distinguish between terms that have the factor of $1/(2t)$ and those that do
not. First,
\begin{align}
P_{0}(x,z) &  =\tilde{w}(x,0)
\end{align}
\begin{align}
P_{1}(x,z) &  =D_{z}^{1}\tilde{w}(x,0)[z]
\end{align}
\begin{align}
P_{2}(x,z) &  =D_{z}^{2}\tilde{w}(x,0)[z]+\tilde{w}(x,0)p_{2}(d\mu,x)[z]
\end{align}
\begin{align}
P_{3}(x,z) &  =D_{z}^{3}\tilde{w}(x,0)[z]+D_{z}^{1}\tilde{w}(x,0)\otimes
p_{2}(d\mu,x)[z]+\tilde{w}(x,0)p_{3}(d\mu,x)[z]
\end{align}
\begin{align}
P_{4}(x,z) &  =D_{z}^{4}\tilde{w}(x,0)[z]+D_{z}^{2}\tilde{w}(x,0)\otimes
p_{2}(d\mu,x)[z]\nonumber\\
&  +D_{z}^{1}\tilde{w}(x,0)\otimes p_{3}(d\mu,x)[z]+\tilde{w}(x,0)p_{4}%
(d\mu,x)[z]
\end{align}
\begin{align}
P_{5}(x,z) &  =D_{z}^{3}\tilde{w}(x,0)\otimes p_{2}(d\mu,x)[z]+D_{z}^{2}%
\tilde{w}(x,0)\otimes p_{3}(d\mu,x)[z]\nonumber\\
&  +D_{z}^{1}\tilde{w}(x,0)\otimes p_{4}(d\mu,x)[z]
\end{align}
\begin{align}
P_{6}(x,z) &  =D_{z}^{4}\tilde{w}(x,0)\otimes p_{2}(d\mu,x)[z]+D_{z}^{3}%
\tilde{w}(x,0)\otimes p_{3}(d\mu,x)[z]\nonumber\\
&  +D_{z}^{2}\tilde{w}(x,0)\otimes p_{4}(d\mu,x)[z]
\end{align}
\begin{align}
P_{7}(x,z) &  =D_{z}^{4}\tilde{w}(x,0)\otimes p_{3}(d\mu,x)[z]+D_{z}^{3}%
\tilde{w}(x,0)\otimes p_{4}(d\mu,x)[z]
\end{align}
\begin{align}
P_{8}(x,z) &  =D_{z}^{4}\tilde{w}(x,0)\otimes p_{4}(d\mu,x)[z].
\end{align}
Next, we list all homogeneous terms containing the factor $\frac{1}{2t}$
\begin{align}
\tilde{P}_{4}(x,z) &  =\tilde{w}(x,0)\frac{p_{4}(\phi,x)[z]}{2t}
\end{align}
\begin{align}
\tilde{P}_{5}(x,z) &  =\tilde{w}(x,0)\frac{p_{5}(\phi,x)[z]}{2t}+D_{z}%
^{1}\tilde{w}(x,0)\otimes p_{4}(\phi,x)[z]
\end{align}
\begin{align}
\tilde{P}_{6} &  =\frac{1}{2t}\left\{
\begin{array}
[c]{c}%
\tilde{w}(x,0)p_{4}(\phi,x)\otimes p_{2}(d\mu,x)[z]+p_{4}(\phi,x)\otimes
D_{z}^{2}\tilde{w}(x,0)[z]\\
+\frac{1}{2t}p_{5}(\phi,x)\otimes D_{z}^{1}\tilde{w}(x,0)[z]
\end{array}
\right\}  \\
\tilde{P}_{7} &  =\frac{1}{2t}\left\{
\begin{array}
[c]{c}%
\tilde{w}(x,0)p_{4}(\phi,x)\otimes p_{3}(d\mu,x)[z]+p_{4}(\phi,x)\otimes
D_{z}^{3}\tilde{w}(x,0)[z]\\
+p_{4}(\phi,x)\otimes p_{2}(d\mu,x)\otimes D_{z}\tilde{w}(x,0)[z]\\
+p_{5}(\phi,x)\otimes D_{z}^{2}\tilde{w}(x,0)+p_{5}(\phi,x)\otimes p_{2}%
(d\mu,x)[z]
\end{array}
\right\}
\end{align}%
\begin{align}
\tilde{P}_{8} &  =\frac{1}{2t}\left(  p_{4}(\phi,x)+p_{5}(\phi,x)\right)
\otimes\left(  \sum_{j=0}^{4}D_{z}^{j}\tilde{w}(x,0)\right)  \otimes\left(
1+\sum_{j=2}^{4}p_{j}(d\mu,x)\right)  [z]\\
&  -\sum_{i=0}^{7}\tilde{P}_{i}(z)
\end{align}
and remainder terms
\begin{align}
R_{\phi}(x,z,t) &  =\left[  \frac{\varrho_{6}(\phi,x)[z]}{2t}+\varrho
(\phi,x,t)[z]\right]  \left(  \sum_{j=0}^{4}D_{z}^{j}\tilde{w}(x,0)\right)
\otimes\left(  1+\sum_{j=2}^{4}p_{j}(d\mu,x)\right)  [z]\label{bterms3}\\
R_{w}(x,z,t) &  =\varrho_{5}(\tilde{w},x)[z]\left(  1+\frac{p_{4}%
(\phi,x)+p_{5}(\phi,x)}{2t}\right)  \otimes\left(  1+\sum_{j=2}^{4}p_{j}%
(d\mu,x)\right)  [z]\label{AOC}\\
R_{\mu}(x,z,t) &  =\varrho_{5}(d\mu,x)[z]\left(  1+\frac{p_{4}(\phi
,x)+p_{5}(\phi,x)}{2t}\right)  \otimes\left(  \sum_{j=0}^{4}D_{z}^{j}\tilde
{w}(x,0)\right)  [z]\label{TheMooch}\\
R_{\phi w}(x,z,t) &  =\left[  \frac{\varrho_{6}(\phi,x)[z]}{2t}+\varrho
(\phi,x,t)[z]\right]  \varrho_{5}(\tilde{w},x)[z]\left(  1+\sum_{j=2}^{4}%
p_{j}(d\mu,x)\right)  [z]\\
R_{\phi\mu}(x,z,t) &  =\left[  \frac{\varrho_{6}(\phi,x)[z]}{2t}+\varrho
(\phi,x,t)[z]\right]  \varrho_{5}(d\mu,x)[z]\left(  \sum_{j=0}^{4}D_{z}%
^{j}\tilde{w}(x,0)\right)  [z]\\
R_{w\mu}(x,z,t) &  =\varrho_{5}(\tilde{w},x)[z]\varrho_{5}(d\mu,x)[z]\left(
1+\frac{p_{4}(\phi,x)[z]+p_{5}(\phi,x)[z]}{2t}\right)  \\
R_{\phi w\mu}(x,z,t) &  =\left[  \frac{\varrho_{6}(\phi,x)[z]}{2t}%
+\varrho(\phi,x,t)[z]\right]  \varrho_{5}(\tilde{w},x)[z]\varrho_{5}(d\mu
,x)[z].\label{Tulsi}%
\end{align}
With this setup (\ref{Big1}) becomes%
\begin{equation}
\frac{1}{(2\pi t)^{d/2}}\int_{B_{\varepsilon}^{d}(0)}e^{-\frac{\Vert
z\Vert_{d}^{2}}{2t}}\left\{
\begin{array}
[c]{c}%
\displaystyle{\sum_{i=0}^{8}}P_{i}(x,z)+\displaystyle{\sum_{i=4}^{7}}\tilde{P}_{i}(x,z)+\tilde{P}_{8}(x,z)\\
+\displaystyle{\sum_{\clubsuit\in\left\{  \phi,w.\mu,\phi w,w\mu,\phi\mu w\right\}
}}R_{\clubsuit}(x,z,t)
\end{array}
\right\}  dz.\label{checkpoint}%
\end{equation}
We deal with these in turn. First, applying Lemma \ref{local_moments} we have
\begin{align}
&\frac{1}{(2\pi t)^{d/2}}\int_{B_{\varepsilon}^{d}(0)}e^{-\frac{\Vert
z\Vert_{d}^{2}}{2t}}\left(  \sum_{i=0}^{8}P_{i}(x,z)\right)  dz\nonumber\\
&=\frac{1}{(2\pi)^{d/2}}\int_{%
\mathbb{R}
^{d}}e^{-\frac{\Vert\zeta\Vert_{d}^{2}}{2}}\left\{
\begin{array}
[c]{c}%
\tilde{w}(x,0)\\
+\left\{  D_{z}^{2}\tilde{w}(x,0)[\zeta]+\tilde{w}(x,0)p_{2}(d\mu
,x)[\zeta]\right\}  t\\
+\left\{
\begin{array}
[c]{c}%
D_{z}^{4}\tilde{w}(x,0)[\zeta]+D_{z}^{2}\tilde{w}(x,0)\otimes p_{2}%
(d\mu,x)[\zeta]\\
+D_{z}^{1}\tilde{w}(x,0)\otimes p_{3}(d\mu,x)[\zeta]+\tilde{w}(x,0)\otimes
p_{4}(d\mu,x)[\zeta]
\end{array}
\right\}  t^{2}\\
+\left\{
\begin{array}
[c]{c}%
D_{z}^{4}\tilde{w}(x,0)\otimes p_{2}(d\mu,x)[\zeta]+\\
+D_{z}^{3}\tilde{w}(x,0)\otimes p_{3}(d\mu,x)[\zeta]+D_{z}^{4}\tilde
{w}(x,0)\otimes p_{2}(d\mu,x)[\zeta]
\end{array}
\right\}  t^{3}\\
+\left\{D_{z}^{4}\tilde{w}(x,0)[\zeta]\otimes p_4(d\mu,x)[\zeta]\right\}t^{4}%
\end{array}
\right\}  d\zeta\label{bf1}\\
&  +\sum_{\alpha=0}^{8}O\left(  t^{\alpha/2}e^{-\frac{\varepsilon^{2}}{4t}%
};\Sigma,J^{4}w,J^{4}d\mu\right)  .
\end{align}
Next, repeating for
\begin{align}
&  \frac{1}{(2\pi t)^{d/2}}\int_{B_{\varepsilon}^{d}(0)}e^{-\frac{\Vert
z\Vert_{d}^{2}}{2t}}\sum_{i=4}^{7}\tilde{P}_{i}(x,z)dz\nonumber\\
&  =\frac{1}{(2\pi)^{d/2}}\int_{%
\mathbb{R}
^{d}}e^{-\frac{\Vert\zeta\Vert_{d}^{2}}{2}}\left\{
\begin{array}
[c]{c}%
\tilde{w}(x,0)\frac{p_{4}(\phi,x)[\zeta]}{2t}t^{2}\\
+\left(
\begin{array}
[c]{c}%
\frac{p_{4}(\phi,x)}{2t}\otimes\left[  \tilde{w}(x,0)p_{2}(d\mu,x)[\zeta
]+D_{z}^{2}\tilde{w}(x,0)[\zeta]\right]  \\
+\frac{p_{5}(\phi,x)}{2t}\otimes D_{z}\tilde{w}(x,0)[\zeta]
\end{array}
\right)  t^{3}%
\end{array}
\right\}  d\zeta\label{bf2}\\
&  +\frac{1}{2t}\sum_{\alpha=4}^{6}O\left(  t^{\alpha/2}e^{-\frac
{\varepsilon^{2}}{4t}};\Sigma,J^{2}w,J^{2}d\mu,J^{5}\phi\right)  \\
&  =\frac{1}{(2\pi)^{d/2}}\int_{%
\mathbb{R}
^{d}(0)}\frac{1}{2}e^{-\frac{\Vert\zeta\Vert_{d}^{2}}{2}}\left\{
\begin{array}
[c]{c}%
p_{4}(\phi,x)\tilde{w}(x,0)[\zeta]t\\
+\left(
\begin{array}
[c]{c}%
p_{4}(\phi,x)\otimes\left[  \tilde{w}(x,0)p_{2}(d\mu,x)[\zeta]+D_{z}^{2}%
\tilde{w}(x,0)[\zeta]\right]  \\
+p_{5}(\phi,x)\otimes D_{z}\tilde{w}(x,0)[\zeta]
\end{array}
\right)  t^{2}%
\end{array}
\right\}d\zeta  \\
&  +\sum_{\alpha=2}^{4}O\left(  t^{\alpha/2}e^{-\frac{\varepsilon^{2}}{4t}%
};\Sigma,J^{2}w,J^{2}d\mu,J^{5}\phi\right)  .
\end{align}

At this point we pause, and collecting terms, observe the fact that%
\begin{align*}
&  \frac{1}{(2\pi t)^{d/2}}\int_{B_{\varepsilon}^{d}(0)}e^{-\frac{\Vert
z\Vert_{d}^{2}}{2t}}\left\{  \sum_{i=0}^{8}P_{i}(x,z)+\sum_{i=4}^{7}\tilde
{P}_{i}(x,z)\right\}  dz.\\
&  =\frac{1}{(2\pi)^{d/2}}\int_{%
\mathbb{R}
^{d}}e^{-\frac{\Vert\zeta\Vert_{d}^{2}}{2}}\left\{
\begin{array}
[c]{c}%
\tilde{w}(x,0)+W_{1}(x,\zeta)t\\
+W_{2}(x,\zeta)t^{2}+W_{3}(x,\zeta)t^{3}+W_{4}(x,\zeta)t^{4}%
\end{array}
\right\}  d\zeta\\
&  +O\left(  te^{-\frac{\varepsilon^{2}}{4t}};\Sigma,J^{2}w,J^{2}d\mu,J^{5}
\phi\right)  +O\left(  e^{-\frac{\varepsilon^{2}}{4t}};\Sigma,J^{4}w,J^{4}
d\mu\right)  .\\
&=\frac{1}{(2\pi)^{d/2}}\int_{%
\mathbb{R}
^{d}}e^{-\frac{\Vert\zeta\Vert_{d}^{2}}{2}}\left\{
\begin{array}
[c]{c}%
\tilde{w}(x,0)+W_{1}(x,\zeta)t\\
+W_{2}(x,\zeta)t^{2}+W_{3}(x,\zeta)t^{3}+W_{4}(x,\zeta)t^{4}%
\end{array}
\right\}  d\zeta\\
&+O\left(  e^{-\frac{\varepsilon^{2}}{4t}};\Sigma,J^{4}w,J^{4}%
d\mu,J^{5}\phi\right).
\end{align*}
In other words, going back to (\ref{checkpoint}) what is needed to complete
the proof is to show that%
\begin{align}
& \frac{1}{(2\pi t)^{d/2}}\int_{B_{\varepsilon}^{d}(0)}e^{-\frac{\Vert
z\Vert_{d}^{2}}{2t}}\left\{  \tilde{P}_{8}(x,z)+\sum_{\clubsuit\in\left\{
\phi,w.\mu,\phi w,w\mu,\phi\mu w\right\}  }R_{\clubsuit}(x,z,t)\right\}  dz\\
& =\tilde{w}(x,0)\Phi(t^{2};\phi)+O(t^{5/2};\Sigma,J^{5}w,J^{4}d\mu,J^{5}\phi).
\end{align}
First we argue that%
\begin{equation}
\frac{1}{(2\pi t)^{d/2}}\int_{B_{\varepsilon}^{d}(0)}e^{-\frac{\Vert
z\Vert_{d}^{2}}{2t}}\tilde{P}_{8}(x,z)dz=O(t^{3};\Sigma,\allowbreak J^{4}w,J^{4}d\mu,J^{5}%
\phi).\label{Qanon}%
\end{equation}
Notice the expansion of $\tilde{P}_{8}$ involves 40 terms, before subtracting
the lower order terms, so we elect to not express these explicitly. However,
for fixed $x,$ $\tilde{P}_{8}$ will be a polynomial in $z$ with a factor of
$1/t$, which can be made explicit in terms $\allowbreak$of $J^{4}w,J^{5}\phi$
and $J^{4}d\mu.$ Each term has order at least $8$ in $z.$ Using the change
of variable $z=\sqrt{t}\zeta$ as in the proof of Lemma (\ref{local_moments})
gives that each term has order at least 4 in $t.$ Since there was an initial
factor of $1/t$ we conclude each term is bounded by a term of order $O(t^{3};J^{4}w,J^{4}d\mu,J^{5}\phi)$. From Lemma (\ref{local_moments}) the remainder
not yet accounted for is of order $O(t^{3}e^{-\varepsilon^{2}/4t};J^{4}w,J^{4}d\mu,J^{5}\phi)$. \ We combine to conclude (\ref{Qanon}.)
\\

Now we turn to remainder terms
\begin{align}
& \frac{1}{(2\pi t)^{d/2}}\int_{B_{\varepsilon}^{d}(0)}e^{-\frac{\Vert
z\Vert_{d}^{2}}{2t}}\left\{  \sum_{\clubsuit\in\left\{  \phi,w.\mu,\phi
w,w\mu,\phi\mu w\right\}  }R_{\clubsuit}(x,z,t)\right\}  dz\label{BenShapiro}%
\\
& =\tilde{w}(x,0)\Phi(t^{2};\phi)(x)+O(t^{5/2};J^{5}w,J^{5}d\mu,J^{5}\phi)
\end{align}
\ We will explicitly deal with $R_{\phi}(x,z,t),$ the remaining $7$ terms can
be dealt with in a similar fashion: \
\[
\frac{1}{(2\pi t)^{d/2}}\int_{B_{\varepsilon}^{d}(0)}e^{-\frac{\Vert
z\Vert_{d}^{2}}{2t}}R_{\phi}(x,z,t)dz
\]%
\begin{align*}
&  =\frac{1}{(2\pi t)^{d/2}}\int_{B_{\varepsilon}^{d}(0)}e^{-\frac{\Vert
z\Vert_{d}^{2}}{2t}}\left[  \frac{\varrho_{6}(\phi,x)[z]}{2t}+\varrho
(\phi,x,t)[z]\right]  \\
&  \times\left(  \sum_{j=0}^{4}D_{z}^{j}\tilde{w}(x,0)\right)  \otimes\left(
1+\sum_{j=2}^{4}p_{j}(d\mu,x)\right)  [z]dz
\end{align*}
Recall
\begin{equation}
\varrho_{6}(\phi,x)[z]=O(\left\Vert z\right\Vert ^{6};\Sigma,J^{6}\phi)\label{bb5}%
\end{equation}
and (\ref{order8})%
\begin{equation}
e^{-\frac{\Vert z\Vert_{d}^{2}}{2t}}\left\vert \varrho(\phi,x,t)[z]\right\vert
\leq C_{\Sigma}^{2}\frac{\Vert z\Vert_{d}^{8}}{8t^{2}}e^{-\frac{\Vert
z\Vert_{d}^{2}}{4t}}.\label{bb6}%
\end{equation}
We then get
\begin{align*}
&  \frac{1}{(2\pi t)^{d/2}}\int_{B_{\varepsilon}^{d}(0)}e^{-\frac{\Vert
z\Vert_{d}^{2}}{2t}}\left[  \frac{\varrho_{6}(\phi,x)[z]}{2t}+\varrho
(\phi,x,t)[z]\right]  \\
&  \times\left(  \sum_{j=0}^{4}D_{z}^{j}\tilde{w}(x,0)\right)  \otimes\left(
1+\sum_{j=2}^{4}p_{j}(d\mu,x)\right)  [z]dz\\
&  \leq\frac{1}{(2\pi t)^{d/2}}\int_{B_{\varepsilon}^{d}(0)}e^{-\frac{\Vert
z\Vert_{d}^{2}}{2t}}\left[  C_{\phi}\left\Vert z\right\Vert ^{6}\right]
\left(  \sum_{j=0}^{4}D_{z}^{j}\tilde{w}(x,0)\right)  \otimes\left(
1+\sum_{j=2}^{4}p_{j}(d\mu,x)\right)  \allowbreak\lbrack z]dz\\
&  +\frac{1}{(2\pi t)^{d/2}}\int_{B_{\varepsilon}^{d}(0)}C_{\Sigma}^{2}%
\frac{\Vert z\Vert_{d}^{8}}{8t^{2}}e^{-\frac{\Vert z\Vert_{d}^{2}}{4t}}\left(
\sum_{j=0}^{4}D_{z}^{j}\tilde{w}(x,0)\right)  \otimes\left(  1+\sum_{j=2}%
^{4}p_{j}(d\mu,x)\right)  [z]dz
\end{align*}
where $C_{\phi}$ is a constant that depends on bounds on $J^{6}\phi$. Scaling, we obtain the following upper bound
\begin{align}
& \frac{1}{(2\pi)^{d/2}}\int_{B_{\frac{\varepsilon}{\sqrt{t}}}^{d}(0)}%
e^{-\frac{\Vert\zeta\Vert_{d}^{2}}{2}}\left[  C_{\phi}\left\Vert \zeta
\right\Vert ^{6}t^{3}\right]  \nonumber\\
&  \times\left(  \sum_{j=0}^{4}D_{z}^{j}\tilde{w}(x,0)t^{j/2}\right)
\otimes\left(  1+\sum_{j=2}^{4}p_{j}(d\mu,x)t^{j/2}\right)  [\zeta]d\zeta\\
&  +\frac{1}{(2\pi)^{d/2}}\int_{B_{\varepsilon t^{-1/2}}^{d}(0)}C_{\Sigma}%
^{2}\frac{\Vert\zeta\Vert_{d}^{8}t^{2}}{8}e^{-\frac{\Vert\zeta\Vert_{d}^{2}%
}{4}}\nonumber\\
&  \times\left(  \sum_{j=0}^{4}D_{z}^{j}\tilde{w}(x,0)t^{j/2}\right)
\otimes\left(  1+\sum_{j=2}^{4}p_{j}(d\mu,x)t^{j/2}\right)  [\zeta]d\zeta.
\end{align}
Now clearly, the first term (in the sense by Lemma \eqref{local_moments}) is a sum of
terms with order at least $3$ in $t$ and dependence on $J^{4}w$ and $J^{4}%
d\mu.$ The second term has a single term of order 2 in $t,$ namely%
\begin{equation}
\frac{1}{(2\pi)^{d/2}}\int_{B^{d}_{\frac{\varepsilon}{\sqrt{t}}}(0)}C_{\Sigma}%
^{2}\frac{\Vert\zeta\Vert_{d}^{8}t^{2}}{8}e^{-\frac{\Vert\zeta\Vert_{d}^{2}%
}{4}}\tilde{w}(x,0)d\zeta=\tilde{w}(x,0)\cdot\Phi(t^{2};\phi).\nonumber
\end{equation}
The remainder of the terms (again using the estimates in Lemma (\ref{local_moments})) are
terms of higher order. We have shown (\ref{BenShapiro}). The analysis of each of the remainder terms of the form 
\begin{equation}
\frac{1}{(2\pi t)^{d/2}}\int_{B_{\varepsilon}^{d}(0)}e^{-\frac{\Vert
z\Vert_{d}^{2}}{2t}}R_{\clubsuit}(x,z,t)dz\label{bterms2}%
\end{equation}
is similar: In each case we can use bounds such as (\ref{bb5}) or (\ref{bb6})
and in others we can use bounds such as
\begin{align*}
\varrho_{5}(\tilde{w},x)[z] = O(\|z\|^{5}_{d};\Sigma,J^{5}\tilde{w}(x,z)).
\end{align*}
We note that after integration, lines \eqref{bterms3}, \eqref{AOC} and \eqref{TheMooch} 
determine the least decaying terms. In fact, each of these terms decays like $O(t^{5/2})$, 
but the term \eqref{bterms3} contains a term of order 6 in $z$ with a factor of $1/(2t)$ which of course comes down to a term of order $O(t^2)$ after rescaling. This
is why this term needed special consideration and why we see the term
$\tilde{w}(x,0){\Phi}(t^{2};\phi)$ in the expansion. In fact, the term \eqref{bterms3} is equivalent to $\tilde{w}(x,0){\Phi}(t^{2};\phi)$ in the sense of inequality \eqref{defPhi}.
\end{proof}

\label{appendix}

\bibliographystyle{amsalpha}
\bibliography{coarse_ricci}

\providecommand{\bysame}{\leavevmode\hbox to3em{\hrulefill}\thinspace}
\providecommand{\MR}{\relax\ifhmode\unskip\space\fi MR }
\providecommand{\MRhref}[2]{%
  \href{http://www.ams.org/mathscinet-getitem?mr=#1}{#2}
}
\providecommand{\href}[2]{#2}
\begin{thebibliography}{HAvL05}

\bibitem[AW17]{AW1}
Antonio~G. Ache and Micah~W. Warren, \emph{Coarse {R}icci curvature as a
  function on {$M\times M$}}, Results Math. \textbf{72} (2017), no.~4,
  1823--1837. \MR{3735525}

\bibitem[AW19]{AW}
\bysame, \emph{{R}icci curvature and the manifold learning problem}, Advances
  in Mathematics \textbf{342} (2019), 14--66.

\bibitem[B{\'E}85]{BE85}
Dominique Bakry and Michel {\'E}mery, \emph{Diffusions hypercontractives},
  S\'eminaire de probabilit\'es, {XIX}, 1983/84, Lecture Notes in Math., vol.
  1123, Springer, Berlin, 1985, pp.~177--206. \MR{889476 (88j:60131)}

\bibitem[BN08]{BN08}
Mikhail Belkin and Partha Niyogi, \emph{Towards a theoretical foundation for
  {L}aplacian-based manifold methods}, J. Comput. System Sci. \textbf{74}
  (2008), no.~8, 1289--1308. \MR{2460286 (2009m:68207)}

\bibitem[CL06]{CoLaf06}
Ronald~R. Coifman and St{\'e}phane Lafon, \emph{Diffusion maps}, Appl. Comput.
  Harmon. Anal. \textbf{21} (2006), no.~1, 5--30. \MR{2238665 (2008a:60210)}

\bibitem[HAvL05]{Heinetal}
Matthias Hein, Jean-Yves Audibert, and Ulrike von Luxburg, \emph{From graphs to
  manifolds---weak and strong pointwise consistency of graph {L}aplacians},
  Learning theory, Lecture Notes in Comput. Sci., vol. 3559, Springer, Berlin,
  2005, pp.~470--485. \MR{2203281 (2006i:68108)}

\bibitem[LP87]{LP87}
John~M. Lee and Thomas~H. Parker, \emph{The {Y}amabe problem}, Bull. Amer.
  Math. Soc. (N.S.) \textbf{17} (1987), no.~1, 37--91. \MR{888880}

\bibitem[Vil09]{vill09}
C{\'e}dric Villani, \emph{Optimal transport}, Grundlehren der Mathematischen
  Wissenschaften [Fundamental Principles of Mathematical Sciences], vol. 338,
  Springer-Verlag, Berlin, 2009, Old and new. \MR{2459454 (2010f:49001)}

\end{thebibliography}

\end{document}